\newtheorem{lemma}{Lemma}[section]
\newtheorem{remark}{Remark}[section]
\newtheorem{definition}{Definition}[section]
\newtheorem{theorem}{Theorem}[section]
\newtheorem{corollary}{Corollary}[section]
\newtheorem{conjecture}{Conjecture}[section]
\newtheorem{proposition}{Proposition}[section]
\newtheorem{assumption}{Assumption}[section]
\newtheorem{example}{Example}[section]
\DeclareMathAlphabet{\mathsfsl}{OT1}{cmss}{m}{sl}
\begin{document}

\title [Branching diffusion with interactions]{Branching diffusions with particle interactions}

\author{J\'anos Engl\"ander and Liang Zhang}

\address{J.E.: Department of Mathematics, University of Colorado\\
 Boulder, CO-80309-0395.}
 
 \address{L.Z.: Oracle Corporation\\
 10075 Westmoor Dr.,  Nr. 200, Westminster, CO-80021.}

\email{Liang.Zhang-2@Colorado.EDU\\ Janos.Englander@Colorado.edu}

\keywords{Spatial branching processes, interaction, branching Ornstein-Uhlenbeck process, branching diffusion, center of mass, 
most recent common ancestor}

\subjclass[2000]{Primary: 60J60; Secondary: 60J80}

\date{\today}
\begin{abstract}
A $d$-dimensional  branching diffusion, $Z$, is investigated, where the linear attraction or repulsion between particles is competing with an Ornstein-Uhlenbeck drift, with parameter $b$ (we take $b>0$ for inward O-U and $b<0$ for outward O-U). This work has been motivated by  \cite{Englander},  where a similar model was studied, but without the drift component. 

We show that the large time behavior of the system depends on the  interaction and the drift in a nontrivial way. Our method provides, \emph{inter alia},   the SLLN for the non-interactive branching (inward) O-U process.

First, regardless of attraction ($\gamma >0$) or repulsion ($\gamma <0$), a.s., as time tends to infinity, the center of mass of $Z$
\begin{itemize}
\item converges to the origin, when $b>0$;
\item escapes to infinity  exponentially fast (rate $|b|$), when $b<0$.
\end{itemize}
We then analyze $Z$ {\it as viewed from the center of mass}, and
finally, for the system as a whole, we show a number of results/conjectures regarding the long term behavior of the system; some of these are scaling limits, while some others concern local extinction.

\end{abstract}

\maketitle

\section{Introduction: Branching motion with drift and self-interaction}
\subsection{Model}
We consider a branching diffusion in $\mathbb{R}^d$, where the motion component is an Ornstein-Uhlenbeck (O-U) process, and dyadic branching occurs in each time unit. (Dyadic branching means precisely two offspring.) In addition, we introduce interaction  between
particles, namely either {\it attraction} or {\it repulsion}.

Let $Z$ denote the process and $Z_t^i$  the $i^{th}$ particle\footnote{We can use an arbitrary labeling, as long as it is independent of the spatial motion.} in time $[m,m+1)$, where $m=0,1,2,...$. As  branching  is unit time, in the time interval $[m,m+1)$
there are $2^m$ particles in total. Without interaction from other particles, $Z^i_t$ is an Ornstein-Uhlenbeck process with drift parameter $b\in \mathbb R$, corresponding to the operator 
$$\frac{1}{2}\Delta -bx\cdot\nabla$$ on $\mathbb R^d$. (Note the negative sign of the drift. It is somewhat unusual, but it fits  our setup better, because of the sign of the interaction parameter $\gamma$, introduced below.) If $b>0$, then we have an `inward'
O-U process; if $b<0$, then we have an `outward' O-U process. If $b=0$, then it is a Brownian motion.

As far as the aforementioned interaction is concerned, let us fix the {\it interaction parameter} $\gamma\ne 0$.
 We assume that the $i^{th}$ particle $Z_t^i$, on the time interval  $[m,m+1)$, `feels' a drift caused by attraction/repulsion of all other particles as
$$\frac{1}{2^m}\sum\limits_{j=1}^{2^m}\gamma\cdot(Z_t^j-\cdot)\, \mathrm{d}t,$$
and so $Z_t^i$ satisfies the following stochastic differential equation:
$$\mathrm{d}Z_t^i = \mathrm{d}W_t^{i,m} - bZ_t^i\, \mathrm{d}t + \frac{1}{2^m}\sum\limits_{j=1}^{2^m}\gamma\cdot(Z_t^j-Z_t^i)\, \mathrm{d}t.$$
 If $\gamma>0$, then this means that particles attract each other, whereas if $\gamma<0$, then this means that
they repel each other.

In the stochastic differential equation above, the $\{W_t^{i,m}\}_{1\le i\le 2^{m}}$ are independent Brownian motions on $[m,m+1)$. In other words, the infinitesimal generator of the $i$th particle in the time interval is 
$$\frac{1}{2}\Delta + \left(\frac{1}{2^m}\sum\limits_{j=1}^{2^m}\gamma\cdot(Z_t^j-x)-bx\right)\cdot\nabla.$$
{\bf Notation.} Throughout the paper, the symbol $\overset{w}\Rightarrow$ (or just  $\Rightarrow$) will denote {\it weak} convergence of finite measures; the symbol $\overset{v}\Rightarrow$ will denote {\it vague} convergence. By a {\it bounded rational rectangle} we will mean a set $B\subset \mathbb R^d$ of the form $B=I_1\times I_2\times\dots \times I_d$, where $I_i$ is a bounded interval with rational endpoints for each $1\le i\le d$. The family of all bounded rational rectangles will be denoted by $\mathcal{R}$. The symbols $X\oplus Y$ will denote the independent sum of the random variables $X$ and $Y$. As usual, $\mathcal{N}(\mu,\sigma^2)$ will denote the normal distribution with mean $\mu$ and variance $\sigma^2$; $\mathtt {Leb}$ will denote $d$-dimensional Lebesgue measure.  Finally, for $z\in\mathbb R$, $\lfloor z \rfloor$ will denote the largest integer which is less than or equal to $z$.

The following criterion will be used in the paper; we omit the standard proof, which follows from the Portmanteau Theorem and the well known condition in Theorem 2.2 in \cite{Billingsley}.

\begin{lemma}\label{PB}
Let $\mu_1, \mu_2, ... $ and $\mu$ be probability measures on $\mathbb{R}^d$ and $\mu<< \mathtt {Leb}$. Then $\mu_n \Rightarrow \mu$ if and only if  $\lim_{n \to \infty}\mu_n(B) = \mu (B)$ for all $B \in \mathcal{R}$.
\end{lemma}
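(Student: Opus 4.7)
The plan is to derive both directions from the Portmanteau theorem, which characterizes $\mu_n \Rightarrow \mu$ for probability measures via the equivalent conditions ``$\mu_n(B) \to \mu(B)$ for every $\mu$-continuity set $B$'' and ``$\liminf_{n} \mu_n(G) \geq \mu(G)$ for every open $G \subseteq \mathbb R^d$''. The hypothesis $\mu \ll \mathtt{Leb}$ enters only to force $\mathcal R$ to lie inside the class of $\mu$-continuity sets: if $B = I_1 \times \cdots \times I_d \in \mathcal R$, then $\partial B$ is contained in $2d$ affine hyperplane pieces, hence $\mathtt{Leb}(\partial B) = 0$, and absolute continuity yields $\mu(\partial B) = 0$. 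The forward direction of the lemma is therefore immediate from Portmanteau.

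For the reverse direction, I would verify the open-set criterion. Given open $G \subseteq \mathbb R^d$, I would inner-approximate $G$ by dyadic rational cubes: let $G_k$ be the union of all closed dyadic cubes of side $2^{-k}$ whose closures lie in $G$. One checks that $G_k \subseteq G_{k+1}$ (each such cube subdivides into $2^d$ smaller ones whose closures remain in $G$) and $\bigcup_k G_k = G$ (every $x \in G$ admits an open ball $B(x,\delta) \subseteq G$ and therefore lies in some closed dyadic cube of diameter $\sqrt{d}\,2^{-k} < \delta$ inside $G$). Continuity of measure from below gives $\mu(G_k) \uparrow \mu(G)$. Fixing $\varepsilon > 0$, I pick $k$ with $\mu(G_k) > \mu(G) - \varepsilon/2$, then a finite disjoint sub-union $F = \bigsqcup_{\ell=1}^N B_\ell$ with each $B_\ell \in \mathcal R$ and $\mu(F) > \mu(G) - \varepsilon$. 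The assumption then gives
$$\mu_n(G) \;\geq\; \mu_n(F) \;=\; \sum_{\ell=1}^N \mu_n(B_\ell) \;\xrightarrow[n\to\infty]{}\; \sum_{\ell=1}^N \mu(B_\ell) \;=\; \mu(F) \;>\; \mu(G) - \varepsilon,$$
so $\liminf_{n} \mu_n(G) \geq \mu(G) - \varepsilon$, and sending $\varepsilon \to 0$ completes the Portmanteau criterion.

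The only mildly delicate point is the dyadic exhaustion $G_k \uparrow G$, which is a standard topological argument using openness of $G$ and the shrinking diameter $\sqrt{d}\,2^{-k}$. A small bookkeeping remark: absolute continuity also ensures that whether the intervals defining a $B \in \mathcal R$ are read as open, closed, or half-open is immaterial for $\mu$, so I may freely treat the approximating cubes as closed when doing the inner approximation while still appealing to the hypothesis. No tightness input is required, since $\mu_n$ and $\mu$ are already probability measures and the open-set inequality alone suffices. I do not anticipate a genuine obstacle.
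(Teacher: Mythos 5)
Your proof is correct and follows exactly the route the paper indicates: the forward direction via the Portmanteau theorem (with $\mu \ll \mathtt{Leb}$ forcing $\partial B$ to be a $\mu$-null set for each $B \in \mathcal{R}$), and the backward direction by verifying the open-set Portmanteau criterion through inner approximation by dyadic rational rectangles, which is precisely the content of Billingsley's Theorem~2.2 specialized to the $\pi$-system $\mathcal{R}$. Since the paper omits the details and merely cites Portmanteau and Theorem~2.2, your argument is a self-contained unwinding of the same reasoning.
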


\subsection{Motivation} This  paper has been motivated by \cite{Englander},  where a similar model was studied. There the motion was Brownian motion ($b=0$), and it has been shown  that the center of the system is a Brownian motion, being slowed down such that it tends to a `terminal position' $N$ almost surely, and $N$ is a $d$-dimensional, normally distributed random variable, with mean zero.  If $P^x$ denotes the probability conditioned on $N=x$,  $x\in\mathbb R^d$, then  the following theorem was demonstrated for $\gamma > 0$ (attraction): 
$$ 2^{-n}Z_n(\,\mathrm{d}y) \Rightarrow \left( \frac{\gamma}{\pi}\right)^{d/2} \exp\left(-\gamma |y - x|^2\right)\, \, \mathrm{d}y,\ P^{x}-a.s.,$$ 
as $n \to \infty$ for almost all $x\in\mathbb{R}^{d} $, where $Z(\mathrm{d}y) $ denotes the discrete measure-valued process corresponding to the interacting  branching particle system.  For $\gamma < 0$, a conjecture was stated.

A similar model for superdiffusions has been introduced and studied by Gill recently \cite{G2013} and results analogous to those in \cite{Englander}, were obtained. The toolsets used in the two papers are very different though. Gill's paper utilizes the so-called {\it historical calculus} of E. Perkins.

It should be mentioned that, although our original motivation was to analyze the compound effect of the drift and the interaction, it turns out that  our method yields an elementary proof for the Strong Law  of Large Numbers for the case of a {\it non-interactive} branching (inward) Ornstein-Uhlenbeck process as well. See Example \ref{non.inter.OU}.

Finally, for classical results on  limit theorems for branching particle systems (without interaction), see the fundamental monograph \cite{AH1983}, and the more recent article \cite{EHK2010}.

\subsection{Existence and uniqueness}\label{EandU}
In this section, we  show the existence and uniqueness for this system (process). Actually, it is easy to see\footnote{Otherwise use `concatenating' for the processes.} that we only need to show that, given the initial value in time interval $[m,m+1)$, the system exists and is unique. 

Now,  in the time interval $[m,m+1)$, we can look at the $2^m$ interacting particles (diffusions) as a single $2^md-$dimensional Brownian motion with a drift {\bf d} : $\mathbb{R}^{2^m}\to\mathbb{R}^{2^m}$defined as
$${\bf d}(x_1,x_2,\cdots,x_{d-1},x_{d},\cdots, x_{2^md}): =\gamma(\beta_1,\beta_2,\cdots,\beta_{d-1},\beta_{d},\cdots, \beta_{2^md})^T,$$
where $\beta_k = 2^{-m}\gamma(x_{\overline{k}}+x_{d+\overline{k}}+\cdots + x_{(2^m-1)d+\overline{k}})- (\gamma+b)x_k $. 
Here $\overline{k}- k$ is a multiple of $d$ and $0<\overline{k}\le d$. 
As the drift {\bf d} is Lipschitz, the existence and uniqueness of our system follows from the uniqueness/existence theorem for stochastic differential equations in high dimensions.

\section{The center of mass}
\begin{definition}[Center of mass]\rm
 For $t\in [m, m+1) $, there are $2^m$ particles, denoted by  $\{Z_t^i\}, i = 1,2,\cdots, 2^m$, moving in space. Hence, letting $m:=\lfloor t\rfloor$, we define the center of mass (C.O.M.)  as $$\overline{Z_t}:= \frac{1}{2^m}\sum\limits_{1}^{2^m} Z_t^i.$$
 \end{definition}
 
In this section we are going to show that as $t\to\infty$: 
\begin{itemize}
\item if $b>0$, then the center of mass converges to the origin, no 
matter if attraction or repulsion holds;
\item if $b<0$, then it will tend to infinity with
`speed' $e^{-bt}$.
\end{itemize}
The significance of this result is that the attraction/repulsion for $Z_t^i$ is given by 
$$\frac{1}{2^m}\sum\limits_{j=1}^{2^m}\gamma (Z_t^j-Z_t^i)\, \mathrm{d}t = \gamma(\overline{Z_t} - Z_t^i)\, \mathrm{d}t,$$
where $\overline{Z_t}$ is as above.
Hence, one can replace the interaction between particles by the interaction with the center of mass. Therefore, as a first step, we will study the large time behavior of 
the center of mass $\overline{Z_t}$. 

Before stating our first result, we note that in this section, we will be interested in $a.s.$ and $L^2$ convergence of the center of mass.  Since, it is easy to see that these limits can be verified coordinate-wise, we  assume $d = 1$ for this section. (The reader should keep in mind that the results work for any $d\ge 1$.)

Our main results here will concern the behavior of the center of mass in the attractive/repulsive case.
But we need  some preliminary lemmas first. Below we give two lemmas regarding a general one-dimensional stochastic differential equation 
\begin{equation}\label{general}
\left\{
\begin{array}{ll}
\mathrm{d} {X_t}&= \beta(t) \mathrm{d}{W_t}- b {X_t} \, \mathrm{d}t,\\
X_0&=0,\ a.s.,\\
\end{array}
\right.
\end{equation}
where we assume that  $b>0$ and that $\beta(\cdot)>0$ is locally Lipschitz. Here $\beta(t)$ can be considered a time change of the Brownian part. We assume that $\beta(t)$ converges to $0$ as $t\to\infty$, that is that the Brownian motion is slowing down completely. We then want to determine  the limiting distribution.
\begin{lemma}
\label{Le:var} Let $X$ be the solution of \eqref{general}.

{\sf (a)} If  $\lim_{t\to\infty}\beta(t) = 0 $, then  $\lim_{t\to\infty}{X_t} = 0 \ in \  L^2$.

{\sf (b)} Assume in addition, that  $ \beta(t)$ is decreasing in $t$, and that $$\sum\limits_{m=1}^{\infty}m\beta^2(m)<\infty.$$ Then $\lim_{t\to\infty}{X_t} = 0$  a.s.

\end{lemma}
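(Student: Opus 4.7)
The plan is to reduce both assertions to variance estimates for an explicit Gaussian process. Solving \eqref{general} with integrating factor $e^{bt}$ gives
$$X_t = e^{-bt}\int_0^t e^{bs}\beta(s)\,\mathrm{d}W_s,$$
so $X_t$ is centered Gaussian with variance
$$\sigma^2(t):=e^{-2bt}\int_0^t e^{2bs}\beta^2(s)\,\mathrm{d}s.$$
Thus $L^2$ convergence of $X_t$ to $0$ is equivalent to $\sigma^2(t)\to 0$, and the a.s.\ statement will follow from such a variance bound combined with a pathwise maximal inequality on each unit interval.

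For part (a), I would fix $\varepsilon>0$, choose $T$ so large that $\beta(s)<\varepsilon$ for all $s>T$, and split the variance integral at $T$. The contribution from $[0,T]$ is bounded by a constant multiple of $e^{-2b(t-T)}$ and vanishes as $t\to\infty$; the contribution from $[T,t]$ is at most $\varepsilon^2/(2b)$ uniformly in $t$. Letting $t\to\infty$ and then $\varepsilon\to 0$ gives $\sigma^2(t)\to 0$.

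For part (b), I would argue first along integer times and then control oscillations on each interval $[m,m+1)$. Using monotonicity of $\beta$ and splitting the integral at $m/2$ yields $\sigma^2(m)\le C(\beta^2(m/2)+e^{-bm})$, and since $\beta(m)\to 0$, the Gaussian tail bound $P(|X_m|>\varepsilon)\le 2\exp(-\varepsilon^2/(2\sigma^2(m)))$ is summable, so Borel--Cantelli yields $X_m\to 0$ almost surely. For $t\in[m,m+1)$, I would write
$$X_t - e^{-b(t-m)}X_m = e^{-bt}\bigl(M_t - M_m\bigr),\qquad M_t:=\int_0^t e^{bs}\beta(s)\,\mathrm{d}W_s,$$
and apply Doob's $L^2$ inequality on $[m,m+1)$ to get
$$P\Bigl(\sup_{m\le t<m+1}\bigl|X_t-e^{-b(t-m)}X_m\bigr|>\varepsilon\Bigr)\;\le\;\frac{C\beta^2(m)}{\varepsilon^2}.$$
Since $\sum m\beta^2(m)<\infty$ implies $\sum\beta^2(m)<\infty$, a second Borel--Cantelli argument forces the oscillations to vanish almost surely, and together with $X_m\to 0$ this proves $X_t\to 0$ a.s.

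The principal obstacle I anticipate is calibrating the splits and the maximal inequality so that the assumed summability is used economically: the sketch above needs only $\sum\beta^2(m)<\infty$, so the extra factor of $m$ in the hypothesis is either slack (reflecting a cruder direct sup-bound, perhaps via a union bound over a dense countable subset of $[m,m+1)$), or it will be absorbed by a slightly different accounting of the martingale oscillations. The monotonicity of $\beta$ is what makes the variance splitting clean; without it one would have to carry $\sup_{s\le t}\beta(s)$ through the estimates.
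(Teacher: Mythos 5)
Your proposal is correct and its overall shape matches the paper's: both solve \eqref{general} with the integrating factor, compute the variance $\sigma^2(t)=e^{-2bt}\int_0^t e^{2bs}\beta^2(s)\,\mathrm{d}s$, and for (b) run a Borel--Cantelli argument over the unit intervals $[m,m+1)$ using a maximal inequality for the exponentially rescaled martingale. For (a) the paper uses L'Hospital on $\sigma^2(t)$ instead of your split-at-$T$ argument; the two are interchangeable.

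Where you genuinely diverge is in (b), and your route is sharper. The paper applies Doob's $L^2$-inequality to the \emph{whole} martingale $Y_t=e^{bt}X_t$ on $[m,m+1)$, giving $P(A_m)\le e^{-2bm}\int_0^{m+1}\beta^2(s)e^{2bs}\,\mathrm{d}s/\epsilon^2$; after splitting the integral at $(m+1)/2$, the far piece contributes $\approx (m/2)\beta^2(m/2)$ and the sum over $m$ becomes comparable to $\sum n\beta^2(n)$, which is exactly why the paper needs the hypothesis $\sum m\beta^2(m)<\infty$. You instead decompose $X_t$ on $[m,m+1)$ into the contracted value $e^{-b(t-m)}X_m$ (handled along integers) and the increment $e^{-bt}(M_t-M_m)$ (handled by Doob applied only to the increment). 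Since $E[(M_{m+1}-M_m)^2]\le C\beta^2(m)e^{2bm}$, the oscillation Borel--Cantelli needs only $\sum\beta^2(m)<\infty$, and the integer-time part also needs only $\sum\beta^2(m)<\infty$ (Chebyshev with $\sigma^2(m)\le C(\beta^2(m/2)+e^{-bm})$ suffices). So your observation in the final paragraph is right: the extra factor of $m$ in the paper's hypothesis is slack produced by applying Doob to the full martingale rather than the increment, and your decomposition strictly weakens the assumption needed (immaterial for the application $\beta(m)=2^{-m/2}$, but a genuine sharpening of the lemma as stated).

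One small point to tighten: you justify summability of the Gaussian tail $\exp(-\epsilon^2/(2\sigma^2(m)))$ by ``since $\beta(m)\to 0$,'' but decay of $\beta$ alone is not enough (e.g.\ $\sigma^2(m)\sim 1/\log m$ would fail). What makes it work is that monotonicity plus $\sum\beta^2(m)<\infty$ forces $m\beta^2(m)\to 0$, hence $\sigma^2(m)=o(1/m)$, so $\exp(-c/\sigma^2(m))$ decays faster than geometrically. Alternatively, skip the Gaussian tail entirely and use Chebyshev, as the crude $P(|X_m|>\epsilon)\le \sigma^2(m)/\epsilon^2$ is already summable under $\sum\beta^2(m)<\infty$; this is both cleaner and more in the spirit of the paper.
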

\begin{proof}
{\sf (a)} Assume, that  $X_t = X(t)$ is of the form  $X(t)=X_1(t)X_2(t)$, with $X_1(0)=0$ and $X_1,X_2$ being of finite variation. Keeping  the product rule for $ \, \mathrm{d} X(t)$ in mind,  set $$\, X_2(t)\mathrm{d} X_1(t) = \beta(t)\mathrm{d}W_t$$ and $$X_1(t)\, \mathrm{d} X_2(t) = -bX(t)\, \mathrm{d}t = - bX_1(t)X_2(t)\, \mathrm{d}t,$$  that is, $\mathrm{d} X_2(t) = -bX_2(t)\, \mathrm{d}t$.

We obtain $X_2(t) = Ce^{-bt},\ C\neq 0$, and thus $\, \mathrm{d} X_1(t) = C^{-1}\beta(t)e^{bt}\mathrm{d}W_t$.
Then $X(t):= e^{-bt}\int\limits_{0}^{t}\beta(s)e^{bs}\, \mathrm{d} W_s$  satisfies \eqref{general}; by uniqueness, it is in fact {\it the} solution to the equation. 

Since $X_t$ is  centered Gaussian, the claim is tantamount to  $\mathtt{Var}(X_t)=e^{-2bt}\int\limits_{0}^{t}\beta^2(s)e^{2bs}\mathrm{d}s\to 0$ (use  It\^o-isometry).
Let $\lim\limits_{t\to\infty}\int\limits_{0}^{t}\beta^2(s)e^{2bs}\mathrm{d}s =\infty$ (otherwise the statement is trivial), and use L'Hospital's rule:
\begin{align*}
\lim\limits_{t\to\infty}\mathtt{Var}(X_t)  &= \lim\limits_{t\to\infty}\frac{\int\limits_{0}^{t}\beta^2(s)e^{2bs}\mathrm{d}s}{e^{2bt}} =\lim\limits_{t\to\infty}\frac{\beta^2(t)}{2b}=0.
\end{align*}

{\sf (b)}  We need to show that for any $\epsilon>0$, we have $$P(\sup\{|X_t|: m\le t<m+1\}>\epsilon,\  \text{i.o.}) = 0.$$ Let $$A_m:=\{\sup\{|X_t|: m\le t<m+1\}>\epsilon\}.$$ Then, by the Borel-Canteli lemma, it is sufficient to show that
 \begin{equation}\label{BC.cond}\sum\limits_{m=1}^{\infty}P(A_m)< \infty.
 \end{equation}
Denote $Y_t :=e^{bt}X_t=\int\limits_{0}^{t}\beta(s)e^{bs}\, \mathrm{d} W_s $, and note that $Y_t$ is a 
martingale as it is an It\^o integral. Thus $|Y_t|$ is a submartingale.

We have $P(A_m)<P(\sup\{|Y_t|: m\le t<m+1\}>\epsilon e^{bm})$.
By Doob's inequality,
$$P(\sup\{|Y_t|: m\le t<m+1\}>\epsilon e^{bm})<\frac{E(|Y_{m+1}|^2)}{\epsilon ^2 e^{2bm}}.$$
As $E(Y_t)= 0$, we have
$$E(|Y_t|^2) = \mathtt {Var}(Y_t) = \int\limits_{0}^{t}\beta^2(s)e^{2bs}ds,$$
and thus $$\frac{E(|Y_{m+1}|^2)}{\epsilon ^2 e^{2bm}} = \frac{\int\limits_{0}^{m+1}\beta^2(s)e^{2bs}ds}{\epsilon ^2 e^{2bm}}.$$
It remains to prove that
$$ \sum\limits_{m=1}^{\infty}e^{-2bm}\int\limits_{0}^{m+1}\beta^2(s)e^{2bs}ds<\infty.$$ 
For $e^{-2bm}\int\limits_{0}^{m+1}\beta^2(s)e^{2bs}ds$, we break up the expression into two parts:
\begin{align*}
e^{-2bm}\int\limits_{0}^{m+1}\beta^2(s)e^{2bs}ds &= e^{-2bm}\int\limits_{0}^{(m+1)/2}\beta^2(s)e^{2bs}ds+e^{-2bm}\int\limits_{(m+1)/2}^{m+1}\beta^2(s)e^{2bs}ds\\
&=:I_1^m+I_2^m.
\end{align*}
We show now that both $ \sum\limits_{m=1}^{\infty} I_1^m $  and $ \sum\limits_{m=1}^{\infty} I_2^m $  are finite. 

\underline{$I_1^m$ summable:} 
$$I_1^m =  e^{-b(m-1)}\int\limits_{0}^{(m+1)/2}\beta^2(s)e^{2bs-bm-b}ds \le  e^{-(m-1)}\int\limits_{0}^{(m+1)/2}\beta^2(s)ds.$$ 

As $\beta$ decreases to $0$, there is  a constant $C$ such that $\beta(s)<C$ for all $s\ge 0$. Then $\int\limits_{0}^{(m+1)/2}\beta^2(s)ds < C^2(m+1)/2 $.
For large $m$,  $m-1>(m+1)/2$, and so
$$I_1^m\le C^2e^{-b(m-1)}(m-1),$$ 
yielding that $ \sum\limits_{m=1}^{\infty} I_1^m\le C^2\sum\limits_{m=0}^{\infty}me^{-bm}=C^2(e^{b}+e^{-b}-2)^{-1} < \infty$.

\underline{$I_2^m$ summable:} 
\begin{align*}
I_2^m& = e^{-2bm}\int\limits_{(m+1)/2}^{m+1}\beta^2(s)e^{2bs}ds \le e^{2b}\int\limits_{(m+1)/2}^{m+1}\beta^2(s)ds\\
&\le e^{2b}(m+1)/2\cdot\beta^2 ((m+1)/2).
\end{align*}
Note that $(m+1)/2\le 2\lfloor(m+1)/2\rfloor$ for  $m\geq 1$, and, since $\beta$ is a decreasing function, one has
\begin{align*}\sum\limits_{m=1}^{\infty} I_2^m&\le \sum\limits_{m=1}^{\infty} e^{2b}(m+1)/2\cdot\beta^2 ((m+1)/2)\\&\le 2 e^{2b}\sum\limits_{m=1}^{\infty}\lfloor(m+1)/2\rfloor\beta^2(\lfloor(m+1)/2\rfloor)\le 4 e^{2b}\sum\limits_{n=1}^{\infty}n\beta^2(n).
\end{align*}
 Since, by assumption, $\sum\limits_{m=1}^{\infty}m\beta^2(m)<\infty$, we have 
$ \sum\limits_{m=1}^{\infty} I_2^m < \infty.$

Since   $I_1^m$ and $I_2^m$ are summable, the summability condition
\eqref{BC.cond} indeed holds.
\end{proof}

Finally, here is a lemma that describes the COM as a process.
\begin{lemma}[SDE for COM]
 On $[0,\infty)$, the process $\overline{Z}$ satisfies the stochastic differential equation $\mathrm{d} \overline{Z_t}= \beta(t) \mathrm{d}{W_t}- b \overline{Z_t} \, \mathrm{d}t$ with $\beta(t) := 2^{-m/2}$, for $t\in [m,m+1)$, where $W$ is a standard Brownian motion. Consequently,
\begin{equation}\label{integral.repr}
Z_t=e^{-bt}\int_0^t \beta (s) e^{bs}\mathrm{d}W_s.
\end{equation}
\end{lemma}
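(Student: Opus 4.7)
The plan is to work interval by interval and then piece the pieces together. Fix $m\in\mathbb N$ and restrict attention to $t\in[m,m+1)$, where there are exactly $2^m$ particles obeying
\[ \mathrm{d}Z_t^i = \mathrm{d}W_t^{i,m} - bZ_t^i\,\mathrm{d}t + \frac{\gamma}{2^m}\sum_{j=1}^{2^m}(Z_t^j-Z_t^i)\,\mathrm{d}t. \]
Summing over $i=1,\dots,2^m$ makes the double sum $\sum_i\sum_j(Z_t^j-Z_t^i)$ vanish by antisymmetry in $(i,j)$. Dividing by $2^m$ therefore yields
\[ \mathrm{d}\overline{Z_t} = \frac{1}{2^m}\sum_{i=1}^{2^m}\mathrm{d}W_t^{i,m} - b\overline{Z_t}\,\mathrm{d}t,\qquad t\in[m,m+1). \]

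Next I would identify the martingale part. Since the $W^{i,m}$ are independent standard Brownian motions, the process
\[ \tilde W^m_t \defmath 2^{-m/2}\sum_{i=1}^{2^m}\bigl(W_t^{i,m}-W_m^{i,m}\bigr),\qquad t\in[m,m+1), \]
is a standard Brownian motion on $[m,m+1)$, and
\[ \frac{1}{2^m}\sum_{i=1}^{2^m}\mathrm{d}W_t^{i,m} = 2^{-m/2}\,\mathrm{d}\tilde W^m_t = \beta(t)\,\mathrm{d}\tilde W^m_t. \]
To obtain a single driving Brownian motion on $[0,\infty)$, I concatenate: set $W_0\defmath 0$ and, for $t\in[m,m+1)$, $W_t\defmath W_m+\tilde W^m_t$. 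Because the families $\{W^{\cdot,m}\}$ for distinct $m$ are independent, the increments of $W$ on disjoint intervals $[m,m+1)$ are independent Gaussian with the correct variance, and continuity across the junction points $t=m$ is built into the construction; hence $W$ is a standard Brownian motion on $[0,\infty)$. With this $W$, the SDE
\[ \mathrm{d}\overline{Z_t} = \beta(t)\,\mathrm{d}W_t - b\overline{Z_t}\,\mathrm{d}t \]
holds on each open interval $(m,m+1)$.

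The last point to check is that $\overline{Z_t}$ does not jump at the branching instants $t=m$. At such a time, each parent particle at position $p$ is replaced by two children at the same position $p$; consequently the contribution $p/2^{m-1}$ of the parent to the pre-branching COM equals the joint contribution $2\cdot p/2^{m}$ of its two children to the post-branching COM. Thus $\overline{Z}$ is continuous everywhere, and the SDE above holds globally on $[0,\infty)$.

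Finally, to derive the integral representation I use the standard integrating factor: set $Y_t\defmath e^{bt}\overline{Z_t}$ and apply It\^o's product rule to get
\[ \mathrm{d}Y_t = be^{bt}\overline{Z_t}\,\mathrm{d}t + e^{bt}\bigl(\beta(t)\,\mathrm{d}W_t - b\overline{Z_t}\,\mathrm{d}t\bigr) = e^{bt}\beta(t)\,\mathrm{d}W_t. \]
Integrating from $0$ to $t$ and using the initial condition $\overline{Z_0}=0$ (the system starts from a single particle at the origin) gives $Y_t = \int_0^t \beta(s)e^{bs}\,\mathrm{d}W_s$, and \eqref{integral.repr} follows upon multiplying through by $e^{-bt}$. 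The only non-routine issue I anticipate is the careful bookkeeping in the concatenation of the $\tilde W^m$'s into a single Brownian motion and the verification that the COM has no jumps at branching times; once those are in place, the rest is standard SDE manipulation.
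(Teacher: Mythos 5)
Your proof is correct and follows essentially the same approach as the paper: average the particle SDEs so the interaction terms cancel, rescale the sum of independent Brownian motions to a single Brownian motion with coefficient $2^{-m/2}$, and then apply the integrating factor. The paper leaves the concatenation across intervals and the continuity of $\overline{Z}$ at branching times implicit; your extra care on those two points is a welcome addition but not a departure from the paper's argument.
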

\begin{proof}
\noindent Consider the time interval $[m,m+1)$, and recall  the definition of the center of mass: $\overline{Z_t} = 2^{-m}\sum\limits_{i=1}^{2^m}Z^{i,m}_t$. For each $i$, the particle $ Z^{i,m}_t$'s motion satisfies
the stochastic differential equation $$\mathrm{d} Z^{i,m}_t = \mathrm{d}W^{i,m}_t + \left( \gamma 2^{-m}\sum\limits_{j=1}^{2^m}(Z^{j,m}_t - Z^{i,m}_t)-bZ^{i,m}_t\right)\, \, \mathrm{d}t,$$
where $\gamma$ is the interaction coefficient, and $b$ is the drift part of the Brownian motion. In our case, we consider 
$b > 0$. 

Taking averages on both sides,  the center of mass $\overline{Z_t}$ will thus satisfy
 $$\mathrm{d} \overline{Z_t} = 2^{-m}\sum\limits_{i=1}^{2^m}\mathrm{d}W^{i,m}_t -   b  \overline{Z_t}\, \, \mathrm{d}t.$$
 \noindent As the Brownian components of different particles are independent, Brownian scaling yields that $$2^{-m}\sum\limits_{i=1}^{2^m}\mathrm{d} W^{i,m}_t = 2^{-m/2} \mathrm{d} \widetilde{W_t},$$
where  $\widetilde{W_t}$ is standard Brownian motion in the time interval $[m,m+1)$.
We thus have 
$$\mathrm{d}  \overline{Z_t}= 2^{-m/2} d\widetilde{W_t}- b  \overline{Z_t} \, \mathrm{d}t.$$
\\
Hence, in the time interval $[m,m+1)$, we have an Ornstein-Uhlenbeck process, while on $[0,\infty)$, the process $\overline{Z}$ satisfies the  general stochastic differential equation in the statement.
\end{proof}

After these preparations, we now turn to the attractive case.
\begin{theorem}[COM; Attraction]
\label{Th:center}
\noindent If $b>0$, then $\lim_{t\to\infty}\overline{Z_t}=0 \ a.s.$ \\
\end{theorem}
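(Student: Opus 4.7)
The plan is to recognize that this theorem is essentially an immediate corollary of the two preceding lemmas once we identify $\overline{Z_t}$ with the solution to the SDE \eqref{general} for an appropriate choice of $\beta$. Concretely, the SDE-for-COM lemma tells us that $\overline{Z_t}$ satisfies
$$\mathrm{d}\overline{Z_t} = \beta(t)\,\mathrm{d} W_t - b\,\overline{Z_t}\,\mathrm{d}t, \qquad \beta(t) = 2^{-\lfloor t\rfloor/2},$$
driven by a single standard Brownian motion $W$. This is of the form \eqref{general}, so the work reduces to verifying the hypotheses of Lemma \ref{Le:var}(b).

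First I would check the simple qualitative hypotheses: $\beta$ is positive, locally Lipschitz (in fact piecewise constant and bounded), decreasing in $t$ (since $m \mapsto 2^{-m/2}$ is decreasing), and $\beta(t)\to 0$ as $t\to\infty$. Second, I would verify the key summability condition
$$\sum_{m=1}^{\infty} m\,\beta^2(m) \;=\; \sum_{m=1}^{\infty} m\cdot 2^{-m} \;<\;\infty,$$
which is a standard convergent series (e.g.\ by the ratio test, or the closed form $\sum m x^m = x/(1-x)^2$ with $x=1/2$).

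With both hypotheses in hand, Lemma \ref{Le:var}(b) applies directly and yields $\lim_{t\to\infty}\overline{Z_t}=0$ almost surely, which is the claim. As a byproduct, part (a) of the same lemma gives $L^2$ convergence at the same time, since $\beta(t)\to 0$ is automatic from the above.

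There is no real obstacle here: the heavy lifting has already been done in Lemma \ref{Le:var}, and the role of $b>0$ is exactly to justify the exponential-weighting calculation that powers the Doob/Borel--Cantelli argument in that lemma's proof. The only modest subtlety is that $\beta$ is only piecewise constant (so not globally Lipschitz in the classical sense), but local Lipschitz continuity on each interval $[m,m+1)$ together with the concatenation argument of Section \ref{EandU} handles this without trouble; alternatively, one may simply observe that the integral representation \eqref{integral.repr} already gives $\overline{Z_t}$ explicitly, so existence/uniqueness questions are moot.
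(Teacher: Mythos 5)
Your proposal is correct and follows exactly the same route as the paper: identify the SDE satisfied by $\overline{Z_t}$ via the COM lemma, note that $\beta(t)=2^{-\lfloor t\rfloor/2}$ is decreasing with $\sum m\beta^2(m)=\sum m2^{-m}<\infty$, and invoke Lemma \ref{Le:var}(b). Your side remark about the piecewise-constant $\beta$ and the explicit integral representation \eqref{integral.repr} is a nice clarification of a point the paper glosses over.
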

\begin{proof}
Recall the stochastic differential equation satisfied by COM. In order to prove the theorem, it is sufficient to check that  the conditions of  Lemma \ref{Le:var} are satisfied. 

Clearly, $\beta$ is decreasing and locally Lipschitz, as  $\beta(t) = 2^{-m/2}$  for $t\in [m,m+1)$,
and furthermore,
$$\sum\limits_{m=1}^{\infty}m\beta^2(m) = \sum\limits_{m=1}^{\infty}m2^{-m} <\infty.$$
Thus,  $\lim_{t\to\infty}\overline{Z_t}=0 \ a.s.$
\end{proof}

\medskip
 For the repulsive case ($b<0$), we have the following theorem.
\begin{theorem}[Exponential escape of the COM for repulsion]\label{escape}
For $b<0$, $\lim\limits_{t\to \infty}e^{bt}\overline{Z_t}=\mathcal{N}$  a.s., where $\mathcal{N}$ is a normal variable with mean zero and  $$\mathtt{Var}(\mathcal{N})=\frac{1-e^{2b}}{|b|(2-e^{2b})} \cdot I_d.$$
(Here $I_d$ is the identity matrix.)
\end{theorem}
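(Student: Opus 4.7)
The plan is to exploit the explicit representation already furnished by the preceding lemma, namely
$$
e^{bt}\overline{Z_t} = Y_t := \int_0^t \beta(s) e^{bs}\,\mathrm{d}W_s,
$$
where $\beta(s) = 2^{-\lfloor s\rfloor/2}$. Since the integrand is deterministic and piecewise continuous, $Y_t$ is a continuous, centered Gaussian martingale with deterministic bracket $\langle Y\rangle_t = \int_0^t \beta^2(s)e^{2bs}\,\mathrm{d}s$. The whole argument will be to identify the limit of this bracket, then quote martingale convergence.

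First I would show that $\langle Y\rangle_t$ converges to a finite limit as $t\to\infty$. Splitting the integral over $[m,m+1)$, where $\beta^2\equiv 2^{-m}$, and using $b<0$ (so $e^{2b}<1$), direct summation of the resulting geometric series (common ratio $e^{2b}/2<1$) gives
$$
\int_0^\infty \beta^2(s)e^{2bs}\,\mathrm{d}s = \sum_{m=0}^\infty 2^{-m}\cdot\frac{e^{2b(m+1)}-e^{2bm}}{2b} = \frac{1-e^{2b}}{|b|(2-e^{2b})}.
$$
This is the scalar $\sigma^2$ appearing in the statement.

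Next, $L^2$-boundedness of $Y_t$ (its variance is bounded above by $\sigma^2$) together with the martingale convergence theorem yields $Y_t\to Y_\infty$ both a.s.\ and in $L^2$. Being an $L^2$-limit of centered Gaussians, $Y_\infty$ is itself $\mathcal{N}(0,\sigma^2)$. For general $d>1$, the driving $d$-dimensional Brownian motion in the COM SDE has independent coordinates, so the argument runs coordinate-wise and automatically produces the covariance $\sigma^2 I_d$ claimed in the theorem.

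The only real calculation is the geometric-series computation identifying $\sigma^2$ with $\frac{1-e^{2b}}{|b|(2-e^{2b})}$; the probabilistic content reduces to the standard $L^2$-martingale convergence theorem applied to a deterministic Wiener integral. This makes the repulsive case markedly easier than Theorem \ref{Th:center}: there $\beta(s)e^{bs}$ is exploding and one must delicately dampen it via the factor $e^{-bt}$, whereas here $b<0$ forces $\beta(s)e^{bs}$ to be square-integrable on $[0,\infty)$ and the dampening factor disappears entirely, turning the problem into a textbook $L^2$-martingale convergence.
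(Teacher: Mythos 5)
Your proof is correct and reaches the same formula $\sigma^2=\frac{1-e^{2b}}{|b|(2-e^{2b})}$ by essentially the same variance computation, but the convergence mechanism you invoke differs from the paper's. The paper applies the Dambis--Dubins--Schwarz theorem to represent $e^{bt}\overline{Z_t}$ as a time-changed Brownian motion $B_{s(t)}$ with deterministic clock $s(t)\uparrow T$, and then reads off the limit as $B_T\sim\mathcal{N}(0,T)$. You instead observe that $Y_t=\int_0^t\beta(s)e^{bs}\,\mathrm{d}W_s$ is an $L^2$-bounded continuous martingale (since $\sup_t\mathtt{Var}(Y_t)=\sigma^2<\infty$), apply the $L^2$-martingale convergence theorem to get $Y_t\to Y_\infty$ a.s.\ and in $L^2$, and then use that an $L^2$-limit of centered Gaussians with variances $\to\sigma^2$ is again $\mathcal{N}(0,\sigma^2)$. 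Both arguments are valid and turn on the same geometric-series evaluation of the bracket. Your route is arguably more elementary, avoiding the time-change machinery entirely; it also gives a.s.\ convergence along the whole continuous time axis, which in fact improves on the paper's own closing remark that elementary arguments would only yield the a.s.\ limit along discrete-time skeletons. The DDS route, for its part, makes the Gaussian law of the limit immediate without appealing to closedness of the Gaussian family under $L^2$ limits, which your argument needs (and correctly supplies). The coordinate-wise reduction to $d=1$ is the same in both.
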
 
\begin{proof}
By the independence of the coordinate processes, it is enough to consider $d=1$.
In order  to show the existence of the limit and to identify it, we are going to utilize the Dambis-Dubins-Schwarz Theorem.
We will use the shorthand $\overline {X_t}:=e^{bt}\overline {Z_t}.$


More precisely, we are going to show that there exists a Brownian motion $B$  on the same probability space where $Z$ is defined, such that   $\overline{X_t} = B_{s(t)},\ P$-a.s. Here $t\mapsto s(t)$ is a deterministic time-change of $t$, mapping $[0,\infty)$ to a finite interval, satisfying that $\lim_{t\to \infty} s(t) = T,$
where \begin{equation}\label{def.T}
T=T(b):= \frac{1-e^{2b}}{|b|(2-e^{2b})}.
\end{equation}
Consequently, we will have that
 $$ \lim\limits_{t\to \infty}\overline {X_t}=\lim\limits_{t\to \infty} B_{< \overline {X}>_t} = B_{\lim\limits_{t\to \infty}< \overline {X}>_t}= B_T.$$

To achieve all these,  recall first that by \eqref{integral.repr}, $\overline {X_t}=\int\limits_{0}^{t}\beta(s)e^{bs}\, \mathrm{d} W_s$, and thus, it is a continuous martingale. Therefore by the Dambis-Dubins-Schwarz Theorem (see e.g. Theorem V.1.6 in \cite{RY2004}), $\overline {X_t}$ is a time-changed Brownian motion: 
$$ \overline {X_t}= B_{< \overline {X}>_t},\ a.s.$$
where $< \overline {X}>$ denotes the increasing process for $\overline {X}$. Since the increasing process is deterministic in this case, we have that
$$s(t):=< \overline {X}>_t=\mathrm{\mathtt {Var}}( \overline {X}_t) =\int\limits_{0}^{t}\beta^2(s)e^{2bs}\, \mathrm{d} s,$$
where $\beta(s) := 2^{-m/2}$ for $s\in [m, m+1)$. Thus,   $\overline {X_t} = B_{s(t)}$, almost surely, and furthermore,
$$\lim\limits_{t\to \infty}s(t)= \sum\limits_{m=0}^{\infty }\int\limits_{m}^{m+1}2^{-m}e^{2bs}\, \mathrm{d} s
 = \sum\limits_{m=0}^{\infty }2^{-m}\cdot\frac{e^{2b(m+1)}-e^{2bm}}{2b}.$$
 
\noindent  To evaluate the infinite sum, one can  use Abel's (summation  by part) formula, which leads to:
$$
 \lim\limits_{t\to\infty}s(t) = \sum\limits_{m=0}^{\infty }2^{-m}\cdot\frac{e^{2b(m+1)}-e^{2bm}}{2b}
  = \frac{e^{2b}-1}{b(2-e^{2b})}=T,
$$
completing the proof.\end{proof}
We note that without the Dambis-Dubins-Schwarz Theorem, much more elementary, standard arguments  still prove  the existence of the almost sure limit, but only along certain `discrete time skeletons.' 

\begin{remark}[Exponential speed of C.O.M.]
\rm As $\lim\limits_{t\to \infty}e^{bt}\overline {Z_t}$ exists a.s. and $ e^{|b|t} \to \infty $, the point $\overline {Z_t}$ will tend to infinity, almost surely, with  `speed' $ e^{|b|t}$ in the sense that $\overline {Z_t} \thickapprox e^{|b|t}\cdot \mathcal{N}$. Furthermore,  even in higher dimensions, it is clear by symmetry considerations that the angular component of  $\overline {Z_t}$ will be uniformly distributed. 

Finally, it is easy to see that $\lim\limits_{b\to 0} T(b) = 2$, in accordance with the already studied driftless case.$\hfill\diamond$
\end{remark}
\section{The system as viewed from the center (`relative system')}
\noindent Having described the motion of the center of mass $\overline{Z_t}$, in order to study the whole system, we need to investigate the `relative system', that is the system as viewed from  $\overline{Z_t}$.

\begin{definition}[Relative system]{\rm 
Denote $Y_t^i := Z_t^i - \overline{Z_t}$. The particle system $\{Y_t^i\}_{i=1}^{2^{\lfloor t\rfloor}}$ will be called the \emph {relative system}, or \emph {the system, as viewed from the center of mass}. }
\end{definition}
We focus on the behavior of the relative system in this section. We will use the shorthand $\sigma_m^2:=1-2^{-m}$.

First, we want to determine the stochastic differential equation for $Y_t^i$. It can be obtained by direct computation, as follows. 
Fixing the time interval $[m,m+1)$, recall that  for each $i$, the particle $ Z^{i,m}_t$'s motion satisfies
 $$\mathrm{d} Z^{i,m}_t = \mathrm{d}W^{i,m}_t + \left( \gamma 2^{-m}\sum\limits_{j=1}^{2^m}(Z^{j,m}_t - Z^{i,m}_t)-bZ^{i,m}_t\right)\, \, \mathrm{d}t,$$ while  $\overline{Z_t}$ satisfies that
 $$\mathrm{d} \overline{Z_t} = 2^{-m}\sum\limits_{i=1}^{2^m}\mathrm{d}W^{i,m}_t -   b  \overline{Z_t}\, \, \mathrm{d}t.$$
Subtracting the second equation from the first, one has
\begin{align*}
&  \mathrm{d}Y_t^i=\mathrm{d}(Z_t^i - \overline{Z_t})= \\
&\ \ \  \sigma_m^2 \mathrm{d}W_t^{i,m}+ \sum_{j\ne i} - 2^{-m} \mathrm{d}W_t^{j,m} +\left(\gamma \overline{Z_t}-\gamma Z_t^{i,m}-bZ_t^{i,m}+b \overline{Z_t}\right)\, \mathrm{d}t=\\
&\ \ \ \sigma_m^2 \mathrm{d}W_t^{i,m}+ \sum_{j\ne i} - 2^{-m} \mathrm{d}W_t^{j,m} -(\gamma + b) Y_t^i \, \mathrm{d}t.
\end{align*}
As $\{W_t^{i,m}\}$ are independent standard Brownian motions, a short computation shows  that 
$\sigma_m^2 W_t^{i,m} \bigoplus_{j\ne i} - 2^{-m} W_t^{j,m}$ is a Brownian motion with variance $\sigma_m^2 t$ at time $t>0.$ Hence,
$$\mathrm{d}Y_t^i= \sigma_m \mathrm{d}\widetilde{W}_t^i -(\gamma + b) Y_t^i \, \mathrm{d}t,$$
where $\widetilde{W}_t^i$ is a driving standard Brownian motion for $Y_t^i$, such that
$$\sigma_m \widetilde{W}_t^i =\sigma_m^2 W_t^{i,m} \bigoplus_{j\ne i} - 2^{-m} W_t^{j,m}.$$
When $ t \to \infty$, (i.e., $m\to \infty$), the process $Y^i$ will asymptotically satisfy the  equation 
\begin{equation}\label{asympt.eq}
\mathrm{d}Y_t^i=  \mathrm{d}\widetilde{W}_t^i -(\gamma + b) Y_t^i \, \mathrm{d}t,
\end{equation}
 yielding that, for large times, the motion of $Y_t^i$ is very close to the one governed by \eqref{asympt.eq}, namely, to an
\begin{enumerate}
\item[{\sf (i)}]     inward O-U process, if $\gamma + b > 0$; 
\item[{\sf (ii)}]   outward O-U process, if $\gamma + b <0$;
\item[{\sf (iii)}]    Brownian motion,  if $\gamma + b = 0$.
\end{enumerate}

As a next step, we need to study the correlation between the particles of $\{Y_t^i\}$ for $t>0$. As $\sum_i\widetilde{W}_t^i = 0$, they are obviously not independent. 
 
 First we determine the `degree of freedom' of $\{\widetilde{W}_t^i\}$. 
Similarly to  \cite{Englander}, one can show that the degree of freedom of  $\{\widetilde{W}_t^i\}$
is $2^m-1$. To explain what this means, 
fix $m\ge 1$ and for $t\in[m,m+1)$ let
$Y_t:=(Y_t^1,...,Y_t^{2^{m}})^T$, where $()^T$ denotes transposed. (This is a vector of length $2^m$
where each component itself is a $d$-dimensional vector; one can actually view it as a $2^m\times d$
matrix too.) We then have
$$\mathrm{d} Y_t=\sigma_m \mathrm{d}\widetilde W^{(m)}_t-\gamma Y_t \mathrm{d}t,$$
where $$\widetilde W^{(m)}= \left(\widetilde W^{m,1},...,\widetilde W^{m,2^{m}}\right)^T$$ and
$$\widetilde W_{\tau}^{m,i}=\sigma_m^{-1}\left(W_{\tau}^{m,i}-2^{-m}\bigoplus_{j=1}^{2^m}
W_{\tau}^{m,j}\right),\ i=1,2,...,2^m$$ are mean zero, correlated
Brownian motions.

Just like in subsection \ref{EandU}, here we can also consider $Y$ as a single $2^md$-dimensional
diffusion.  Each of its components is an Ornstein-Uhlenbeck process with asymptotically unit diffusion
coefficient.

By independence, it is enough to consider the $d=1$ case.

Let us first describe the distribution of $\widetilde W^{(m)}_t$ for $t\ge 0$ fixed. Recall that $\{W_s^{m,i},\ s\ge 0;\
i=1,2,...,2^m\}$ are independent Brownian motions. By definition, $\widetilde W^{(m)}_t$ is a
$2^m$-dimensional multivariate normal:
\begin{eqnarray*}\label{matrix} \widetilde W^{(m)}_t=\sigma_m^{-1}\cdot\left( \begin{array}{ccc}
1-2^{-m} & -2^{-m} & ... \  -2^{-m} \\
-2^{-m} & 1-2^{-m} & ... \  -2^{-m} \\
.\\
.\\
.\\
-2^{-m} & -2^{-m} & ...  \  1-2^{-m} \end{array} \right)W_t^{(m)}\\
\hfill=:\sigma_m^{-1}\mathbf{A}^{(m)} W_t^{(m)},
\end{eqnarray*}
where $W_t^{(m)}=(W_t^{m,1},...,W_t^{m,2^{m}})^T$,
yielding
$$\mathrm{d} Y_t= \mathbf{A}^{(m)} \mathrm{d}W_t^{(m)}-\gamma Y_t \mathrm{d}t.$$
Since we are viewing the system from the center of mass, $\widetilde W^{(m)}_t$ is a \emph{singular} multivariate normal
and thus $Y$ is a degenerate diffusion. The `true' dimension of $\widetilde W^{(m)}_t$  is  r$(\mathbf{A}^{(m)})$.
Then the same argument as in \cite{Englander}, yields
that 
rank$(\mathbf{A}^{(m)})=2^m-1,$
and the above comment about the degrees of freedom should be understood in this sense.

Moreover, the driving Brownian motions $\{\widetilde{W}_t^i\}$ will be exactly the same as in \cite{Englander},
and thus they have asymptotically vanishing correlation (see Remark 12 in \cite{Englander}).

The relative system thus coincides with the driftless one in \cite{Englander}, if $\gamma$ is replaced by $\gamma+b$.

\section{A useful transformation: the process $Z^{\Delta}$}
We first make an important observation, making the last sentence of the previous section more general:
we notice  that  $\gamma$ and $b$ are `interchangeable' in the following sense.
\begin{lemma}[Interchangeable coefficients]\label{interchange}\rm
Suppose that we have two branching particle systems, and $Y$ and $\mathcal{Y}$ represent the relative systems for them. 
Denote by $b_1, \gamma_1$ and $b_2, \gamma_2$  the corresponding coefficients of   $Y$ and $\mathcal{Y}$. 
Assume that  $b_1+\gamma_1 = b_2+\gamma_2$. Then the laws of $Y$ and $\mathcal{Y}$ are the same. 
\end{lemma}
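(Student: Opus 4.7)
The plan is to read off the law of the relative system directly from the SDE derived in the previous section and observe that it is governed by $b+\gamma$ only through the sum $b+\gamma$, so that the hypothesis $b_1+\gamma_1=b_2+\gamma_2$ forces identical laws.

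First I would recall the SDE obtained at the start of the previous section, namely, for $t\in[m,m+1)$,
$$\mathrm{d} Y_t^i=\sigma_m\,\mathrm{d}\widetilde W_t^i-(\gamma+b)Y_t^i\,\mathrm{d}t,\qquad 1\le i\le 2^m,$$
and note the two key facts established there: (a) the diffusion coefficient $\sigma_m=\sqrt{1-2^{-m}}$ depends only on $m$; (b) the driving Brownian motions $\widetilde W^{(m)}=\sigma_m^{-1}\mathbf{A}^{(m)}W^{(m)}$ are constructed from the \emph{a priori} independent Brownian motions $\{W^{m,i}\}$ by a linear transformation that involves neither $\gamma$ nor $b$. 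Hence, for any pair of parameters, the noise input to the relative system has exactly the same joint law, while the drift enters only via the scalar $\gamma+b$.

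Second, I would deal with the branching instants $t=m$. At such an instant, each parent at position $Z_{m-}^i$ splits into two offspring located at the same point, and one checks immediately that the new center of mass coincides with the old one (each spatial position is simply counted with multiplicity two in a doubled sum). Consequently, the relative coordinates of the offspring are deterministic copies of the parent's relative coordinate, a rule that involves no parameter. Thus both the reset at branching times and the inter-branching SDE depend on $(b,\gamma)$ only through the sum $b+\gamma$.

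Finally, I would invoke strong uniqueness for the (globally) Lipschitz SDE on each interval $[m,m+1)$ (established in the existence/uniqueness discussion of Section~\ref{EandU}), together with the deterministic rule at branching times, to conclude that the pathwise law of the relative system $\{Y_t^i\}_{t\ge 0,\,1\le i\le 2^{\lfloor t\rfloor}}$ is a functional of $b+\gamma$ alone. Substituting $b_1+\gamma_1=b_2+\gamma_2$ then yields that $Y$ and $\mathcal{Y}$ have the same law. The only point that requires a bit of care, and which I would expect to be the main (albeit minor) obstacle, is verifying that the branching transition for the relative coordinates is truly parameter-free; once this is done, the rest is immediate from the SDE representation and uniqueness.
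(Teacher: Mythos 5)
Your proposal is correct and follows essentially the same route as the paper: read off from the SDE for the relative system that the drift enters only through $b+\gamma$, note that the driving noise $\sigma_m^{-1}\mathbf{A}^{(m)}W^{(m)}$ and diffusion coefficient $\sigma_m$ are parameter-free, and then invoke the Lipschitz existence/uniqueness argument of Section~\ref{EandU} to conclude equality in law. The one point you flag as needing care --- that the branching transition for the relative coordinates is parameter-free because the center of mass is unchanged when every particle splits in place --- is indeed the right thing to check; the paper absorbs it silently into its induction step, so your making it explicit is a small improvement rather than a divergence.
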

\begin{proof}
Fix $m\ge 0$ and  $i = 1,2,\cdots,2^m$. Then $Y^i$ and $\mathcal{Y}^i$ satisfy the same stochastic differential equation
$$\mathrm{d}Y_t^i=  (1-2^{-m})^{1/2}\mathrm{d}\widetilde{W}_t^i -\gamma Y_t^i \, \mathrm{d}t $$ in the
interval $t \in [m,m+1)$, where  $\gamma = b_1 + \gamma_1 = b_2+\gamma_2$.  Thus, the fact that  single particles have the same law in the two systems, follows by induction, along with the existence and uniqueness of the solution for stochastic differential equations. (We know that  for $m = 0$ they start with the same initial value.)

The fact that even the {\it joint} distributions of the two particle systems agree, follows the same way as we proved existence and uniqueness for the model in subsection \ref{EandU}, except that now the independent driving Brownian motions must be replaced by 
$\sigma_m^{-1}\mathbf{A}^{(m)} W_t^{(m)}$ in $[m,m+1)$ (recall \eqref{matrix}).
Since the piecewise Lipschitz-ness of the coefficients is preserved, the existence and uniqueness theorem is still in force.
\end{proof}
We now define a transformation which plays a crucial role in this paper.
\begin{definition}[$\Delta$-transformation] \rm Consider $Z$ with $\gamma$ and $b$ given and let $\mathcal{Z}=Z^{\Delta}$ be another system with 
parameters 
\begin{eqnarray*}
\gamma^{\Delta}&:=\gamma-\Delta;\\ b^{\Delta}&:=b+\Delta.
\end{eqnarray*} 
Since $b+\gamma=b^{\Delta}+\gamma^{\Delta}$, we know by Lemma \ref{interchange} that the corresponding relative systems $Y$ and $\mathcal{Y}$ have the same law.

Consider $Z$ and $\mathcal{Z}$ on the same probability space. Then
\begin{equation}
Z_t(B)=Y_t(B-\overline{Z}_t)\overset{d}=\mathcal{Y}_t(B-\overline{Z}_t)=\mathcal{Z}_t(B-(\overline{Z}_t-\overline{\mathcal{Z}}_t)),\label{nice.match}
\end{equation}
for $B\subset \mathbb{R}^d$ Borel and $t\ge 0$. In fact, 
\begin{equation}\label{process.match}
Z\ \text{has the same law as the process}\ t\mapsto  \mathcal{Z}_t(\cdot-D_t),
\end{equation}
where $D_t:=\overline{Z}_t-\overline{\mathcal{Z}}_t$. The behavior of $D_t$ for large times depends on the signs of $b$ and $b+\gamma$. E.g. if they are both positive, then $D_t$ tends to the origin a.s.

In particular, with appropriate transformations we can `knock out' either the interaction or the O-U drift:

{\bf (a) Representation with non-interactive system:} pick $\Delta:=\gamma$. That is, let the non-interactive process $\mathcal{Z}$ correspond to $\gamma^{\Delta}=0$ and $b^{\Delta}:=b+\gamma$. Then \eqref{process.match} gives a remarkable link between the interactive and the non-interactive models. This connection is reminiscent of the one found in \cite{G2013} (see Remark 3.2 there).

{\bf (b) Eliminating the O-U drift:} pick $\Delta:=-b$. That is, the motion component of $Z^{\Delta}$ is just a Brownian motion, similarly to \cite{Englander}. Then \eqref{process.match} gives a  link between our model and the driftless one studied in \cite{Englander}.
\end{definition}

\section{Outline of the strategy of the rest of the proofs}
In light of the previous section, we could choose to base the analysis of the relative system on the corresponding result in \cite{Englander} when $b+\gamma>0$
(by eliminating the drift -- see part (b) in the previous section), or on the results on the global system in Example 11 of \cite{EHK2010} when $b+\gamma<0$ (by eliminating the interaction -- see part (a) in the previous section).

In the second case, we should handle the problem that the setting is different in \cite{EHK2010} in that the branching is not unit time but rather exponential.

On top of that, the method of the proof in both \cite{Englander} and \cite{EHK2010} requires the introduction of two non-trivial auxiliary functions, related to the model.

Instead of choosing one of the paths alluded to above, we decided to give a completely elementary proof in the next section  for the Strong Law for the relative system in our case, when $b+\gamma>0$. It does not use the complicated machinery of \cite{Englander} or \cite{EHK2010}, and it is done in unit time. The proof only uses some calculations involving the most recent common ancestors of particles and some covariance estimates.

In particular, it gives a new, elementary proof for the Strong Law for the global system, for the case of a non-interactive branching (inward) Ornstein-Uhlenbeck process. See Example \ref{non.inter.OU}.

\section{Proof of SLLN for the relative system when $b+\gamma>0$}
\subsection{General comments}
Recall that $\sigma_m^2:=1-2^{-m}$, and that for the relative system $Y$,
$$\mathrm{d}Y_t^i=  \sigma_m \mathrm{d}\widetilde{W}_t^i -(b+\gamma) Y_t^i \, \mathrm{d}t $$ for $i = 1,2,3,\cdots,2^m$ in the
interval  $t \in [m,m+1)$. Recall also that  $\sigma_m d\widetilde{W}_t^i = \sigma_m^2 dW_t^{i,m} \bigoplus_{j\ne i} - 2^{-m} dW_t^{j,m} $.

Assuming $b+\gamma>0$, our goal is to find  $\lim_{m\to\infty}\frac{1}{2^m}\sum\limits_{i=1}^{2^{m}} Y_t^i(B)$ for a generic Borel set $B\subset \mathbb R^d$.  (Here, we consider $Y_t$ as a random measure, and that is why we may write  $Y_t^i(B) = 1_B(Y_t^i), i = 1,2,3,\cdots,2^m$.) For $d=1$, this will be achieved in Theorem \ref{SLLN.thm} and 
subsequently, it will be upgraded to higher dimensions; before these, we will prove several preparatory results.

\begin{remark}\rm
When taking the limit, we will just  consider  integer times. This is  somewhat weaker than considering continuous times, however, since the model is about unit time branching, we did not have sufficient motivation to go into the technical details as to how one upgrades the limit along integer times to a limit along continuous times. (There are existing techniques though, going back to the work of Assmussen and Hering, see \cite{AH1983,EHK2010}.)$\hfill\diamond$
\end{remark}

As mentioned above, for simplicity we will first treat $d = 1$, and then show that the main result we got also works  for high dimensions.

Next, let us sketch our {\it strategy} of the investigation:
\begin{enumerate}
\item Find the correlation between the particle positions $Y_m^i, 1\le i\le 2^m;m\ge 1$.
\item Use the correlations in (1) to control the correlation between $Y_m^i(B)$, where $B\subset \mathbb R^d$. 
\item Establish the Strong Law of Large Numbers for $Y$, that is, find a measurable
function $0\le f$ such that for $B\subset \mathbb R^d$ Borel,
$$\lim\limits_{m\to \infty}\frac{1}{2^m}\sum\limits_{i=1}^{2^m}Y_m^i(B) = \int\limits_{B}f(x) \, \mathrm{d}x, a.s.$$
\end{enumerate}
\begin{assumption}[No drift]\label{nodriftass}\rm 
In this section, when  studying the relative system, we will assume that $b=0$ and $\gamma>0$.
We can do this without the loss of generality, since given $\gamma$ and $b$, one may apply a $\Delta$-transformation with $\Delta:=-b$, that is, eliminate the O-U drift. Then $b^{\Delta}=0$ and $\gamma^{\Delta}=b+\gamma$, and $Y$ and $\mathcal{Y}$ have the same law by Lemma \ref{interchange}. 
\end{assumption}

\subsection{Crucial estimates}
\noindent Now let us focus on  the distribution of the random variables $Y_t^i, 1\le i\le 2^{\lfloor t\rfloor},t\ge 0$. Since
$dY_t^i=\sigma_m d\widetilde{W}_t^i -\gamma Y_t^i \, \mathrm{d}t$, and since  for the general differential equation 
$$ dY_t = a d W_t - r Y_t\, \, \mathrm{d}t,$$ ($a,r>0$) with initial value $Y_0$, the solution is $Y _t = e^{-r t}\left[ \int\limits_{0}^{t}e^{r s}d(aW_s)+Y_0\right],$
it follows, by conditioning on $Y_m^i$, that 
$$Y_{m+1}^i = e^{-\gamma}\left[ \int\limits_{m}^{m+1} e^{\gamma (s-m)}d(\sigma_m d\widetilde{W}_s^i)+Y_m^i\right], a.s.$$
We know that $\{\widetilde{W}_s^i\}_{0\le s<1}$ is a Brownian motion on time interval $[m, m+1]$ and that it is independent of $Y_m^i$.

By symmetry, the distributions of $Y_t^i$ and $Y_t^j$ are  the same, so we will  just write  $Y_t^i$.

We thus have $Y_{m+1}^i = e^{-\gamma}(Y_m^i \bigoplus X_m^i)$, where 
\begin{equation}\label{x.part}
X_m^i = \int\limits_{m}^{m+1} e^{\gamma (s-m)}d(\sigma_m d\widetilde{W}_s^i)
\end{equation}
 is a normal variable with distribution $\mathcal N\left(0,\sigma^2_m (2\gamma)^{-1}
\left(e^{2\gamma}-1\right)\right),$ and $Y_0^i = 0$. Note  that $Y_m^i$ and $X_m^i$ (or more generally, $X_m$ and $Y_m$) are  independent, as $X_m$ is defined by a stochastic integral on $[m,m+1]$.


\begin{proposition} \label{Variance Theorem} 
For any $m\ge 0$,
$$\mathtt{Var}(Y^i_{m+1})=\sigma^2(Y_{m+1}^{i}) = \frac{e^{2\gamma}-1}{2\gamma}\,e^{-2m\gamma}\left(\frac{1-e^{2m\gamma}}{1-e^{2\gamma}} - \frac{1-(\frac{1}{2}e^{2\gamma})^m}{2-e^{2\gamma}}\right).$$ 
\end{proposition}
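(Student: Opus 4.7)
The starting point is the observation already made just before the statement: the increment $X_m^i$ defined in \eqref{x.part} is independent of $Y_m^i$, and
$$Y_{m+1}^i = e^{-\gamma}\bigl(Y_m^i \oplus X_m^i\bigr),$$
with $X_m^i\sim \mathcal{N}\bigl(0,\sigma_m^2 (2\gamma)^{-1}(e^{2\gamma}-1)\bigr)$ and $Y_0^i=0$. Writing $v_m:=\mathtt{Var}(Y_m^i)$ and $c:=(e^{2\gamma}-1)/(2\gamma)$, independence immediately gives the scalar linear recursion
$$v_{m+1} = e^{-2\gamma}\bigl(v_m + c(1-2^{-m})\bigr), \qquad v_0=0.$$
So my plan is simply to solve this recursion explicitly and match it to the formula in the statement.

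For the iteration, I would unroll $v_{m+1}$ using $v_0=0$ (which forces $v_1=0$, consistent with the fact that at time $1$ there is only one particle, which coincides with the center of mass). This yields
$$v_{m+1}=c\sum_{k=1}^{m} e^{-2(m+1-k)\gamma}\bigl(1-2^{-k}\bigr)=c\sum_{k=1}^{m}e^{-2(m+1-k)\gamma}-c\,e^{-2(m+1)\gamma}\sum_{k=1}^{m}\bigl(e^{2\gamma}/2\bigr)^{k}.$$
Each of the two sums is a finite geometric series. The first evaluates, after a reindexing $j=m+1-k$, to $c\,(1-e^{-2m\gamma})/(e^{2\gamma}-1)$, and pulling out a factor of $e^{-2m\gamma}$ rewrites it as
$$c\,e^{-2m\gamma}\cdot\frac{1-e^{2m\gamma}}{1-e^{2\gamma}}.$$
The second geometric sum evaluates to $\tfrac{e^{2\gamma}}{2-e^{2\gamma}}\bigl(1-(e^{2\gamma}/2)^m\bigr)$; multiplying by the prefactor $c\,e^{-2(m+1)\gamma}$ and simplifying gives $c\,e^{-2m\gamma}\cdot\dfrac{1-(e^{2\gamma}/2)^m}{2-e^{2\gamma}}$. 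Subtracting the two contributions produces exactly the formula in the statement.

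The only step that is not purely mechanical is arranging the first geometric sum so that the common factor $e^{-2m\gamma}$ is extracted; it looks at first as if one gets $(1-e^{-2(m+1)\gamma})/(e^{2\gamma}-1)$ from the straightforward iteration, and one has to realize that the correct normalization, matching the $e^{-2m\gamma}$ already visible in the second term, comes from reindexing the summation over $k\in\{1,\dots,m\}$ rather than $k\in\{0,\dots,m\}$ (in other words, it is crucial that $X_0^i=0$ a.s.\ because $\sigma_0=0$, so the recursion effectively starts at $v_1=0$ rather than at $v_0=0$ contributing a spurious $k=0$ term). Beyond this bookkeeping the entire argument is a one-line recursion followed by two geometric-series evaluations; sanity checks at $m=0,1,2$ (giving $v_1=0$, $v_2=ce^{-2\gamma}/2$, $v_3=ce^{-4\gamma}/2+3ce^{-2\gamma}/4$) confirm the formula.
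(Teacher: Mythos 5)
Your proof is correct and follows essentially the same route as the paper: use the independence of $Y_m^i$ and $X_m^i$ to obtain the scalar linear recursion $v_{m+1}=e^{-2\gamma}\bigl(v_m+c(1-2^{-m})\bigr)$ with $v_0=0$, unroll it, and evaluate the two resulting geometric series. The only cosmetic difference is that you reindex $j=m+1-k$ in the first sum before summing, whereas the paper factors $e^{-2(m+1)\gamma}$ out of both sums first; the bookkeeping around the vanishing $n=0$ term (since $\sigma_0=0$) is handled identically in substance.
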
 

\begin{proof}

\noindent Since $Y_{m+1} = e^{-\gamma}(Y_m \bigoplus X_m)$ and $Y_0 = 0$,  we can compute the variance of $Y_{m+1}$ using recursion, due to the 
independence of $Y_m$ and $X_m$. First, we have
\begin{eqnarray*}
\sigma^2(Y_{m+1})&=& e^{-2\gamma}\left(\sigma^2(Y_{m}) + \sigma^2(X_{m})\right) \\   &=& e^{-2\gamma} \left(\sigma^2(Y_{m}) + \frac{1}{2\gamma}\left(1-\frac{1}{2^m}\right)
(e^{2\gamma}-1)\right),
\end{eqnarray*}
 and $\sigma^2(Y_0) = 0$. 
~\\
Thus, for convenience, we denote $a_m:=\sigma^2(Y_m) $ and $$b_m := \sigma^2(X_m) = \frac{1}{2\gamma}\left(1-\frac{1}{2^m}\right)\left(e^{2\gamma}-1\right).$$
~\\
Since $a_{m+1} = e^{-2\gamma}(a_m + b_m),$ we have the following recursion:
\begin{align*}
a_{m+1}        &= e^{-2\gamma}(e^{-2\gamma}(a_{m-1} + b_{m-1}) + b_m)\\
        &= (e^{-2\gamma})^2a_{m-1}+(e^{-2\gamma})^2b_{m-1}+e^{-2\gamma}b_m\\
        &= (e^{-2\gamma})^3a_{m-2}+(e^{-2\gamma})^3b_{m-2}+(e^{-2\gamma})^2b_{m-1}+e^{-2\gamma}b_m\\
        &= (e^{-2\gamma})^{m+1}a_{0}+(e^{-2\gamma})^{m+1}b_{0}\\
        &\ \ \ \ \ +\cdots+(e^{-2\gamma})^3b_{m-2}+(e^{-2\gamma})^2b_{m-1}+e^{-2\gamma}b_m.\\
\end{align*}

\noindent We have $a_0 = b_0 = 0$, and thus, for $m\ge 1$,
\begin{eqnarray*}a_{m+1}&=&\sum_{n=1}^{m}(e^{-2\gamma})^{m+1-n}\frac{1}{2\gamma}\left(1-\frac{1}{2^n}\right)\left(e^{2\gamma}-1\right)\\
&=& \frac{e^{-2m\gamma}(1-e^{-2\gamma})}{2\gamma}\left(\sum_{n=1}^{m}e^{2n\gamma} - \sum\limits_{n=1}^{m}\left(\frac{e^{2\gamma}}{2}\right)^n\right).
\end{eqnarray*}
Summing the series,
$$a_{m+1} = \frac{e^{2\gamma}-1}{2\gamma}\,e^{-2m\gamma}\left[\frac{1-e^{2m\gamma}}{1-e^{2\gamma}} - \frac{1-(\frac{1}{2}e^{2\gamma})^m}{2-e^{2\gamma}}\right],\ m\ge 1.$$
Using the definition of $a_{m+1}$, the proof is complete.
\end{proof}
We now need to analyze the covariance between $Y_m^i$ and $Y_m^j$ for $1\le i,j\le 2^m$ and $m\ge 1$. 
Here the notion of the {\it most recent common ancestor} (MRCA) becomes important.
\begin{definition}[MRCA]\rm 
Consider $m\ge 0$ and the particles $Y_m^i,Y_m^j, 1\le i\neq j\le 2^m$. A common ancestor is a particle considered at some integer  time  interval $[a-1,a)$ with $0<a<m$, which is an ancestor to both $Y_m^i$ and $Y_m^j$. The MRCA of the particles is a common ancestor such that $a$ is maximal. This $a$ is then called the {\it splitting time} of the particles. 

Note that in this definition, we consider  particles living in different  
time intervals $[m,m+1)$ as different particles.
\end{definition}
As  different pairs of particles $(Y_m^i,Y_m^j)$ with different MRCA's may have different covariances, we need to take into account the MRCA  of $(Y_m^i,Y_m^j)$. Hence, with splitting time $a>0$, we will write $(Y_{m,a}^i,Y_{m,a}^j)$ for $(Y_m^i,Y_m^j)$.

\begin{theorem}[Covariance bound]
\label{Cov theorem}
For  $m\ge 1$ and $ 1\le i\neq j\le 2^m$, $$\mathtt {Cov}(Y_{m,a}^j,Y_{m,a}^i )<Cm(e^{-2\gamma(m-a)}+2^{-m}),$$where $C$ is a constant which only depends on 
$\gamma$.
\end{theorem}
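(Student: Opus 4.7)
The plan is to unwind the recursion $Y_{k+1}^i = e^{-\gamma}(Y_k^i + X_k^i)$ along the ancestral line of each particle, and then to exploit that the increments $\{X_k\}$ live on disjoint unit intervals, so that only the ``diagonal-in-time'' contributions survive in the covariance.

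To set this up, for each particle $Y_m^i$ and each $0 \le k \le m$, let $(i,k)$ denote the index of its ancestor at time $k$, and write $X_k^{(i,k)}$ for the corresponding stochastic integral from \eqref{x.part}. Iterating the recursion from $Y_0 = 0$ along the ancestral path yields
$$Y_m^i = \sum_{k=0}^{m-1} e^{-(m-k)\gamma}\, X_k^{(i,k)}.$$
Since $X_k$ and $X_\ell$ involve driving Brownian increments on disjoint unit intervals for $k \ne \ell$, they are independent, and consequently
$$\mathtt{Cov}(Y_m^i, Y_m^j) \;=\; \sum_{k=0}^{m-1} e^{-2(m-k)\gamma}\, \mathtt{Cov}\bigl(X_k^{(i,k)},\, X_k^{(j,k)}\bigr).$$

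I would then split the sum at the splitting time $a$. For $k \le a-1$ the two ancestors coincide, so the covariance reduces to $\mathtt{Var}(X_k)=\sigma_k^2\,(e^{2\gamma}-1)/(2\gamma)$. For $k\ge a$ the ancestors are distinct, and a short calculation starting from $\sigma_k\,d\widetilde{W}^i=\sigma_k^2\,dW^{i,k}-2^{-k}\sum_{\ell\ne i}dW^{\ell,k}$ gives $\mathtt{Cov}(\sigma_k d\widetilde{W}^{i'}_t,\sigma_k d\widetilde{W}^{j'}_t)=-2^{-k}\,dt$ for $i'\ne j'$, and hence $\mathtt{Cov}\bigl(X_k^{(i,k)}, X_k^{(j,k)}\bigr)=-2^{-k}(e^{2\gamma}-1)/(2\gamma)$. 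The pre-split contribution $\sum_{k=0}^{a-1}e^{-2(m-k)\gamma}\sigma_k^2$ is then trivially dominated by a constant times $e^{-2\gamma(m-a)}$ via the geometric series.

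The delicate part, and the main obstacle, is the ``post-split'' sum
$$\sum_{k=a}^{m-1} e^{-2(m-k)\gamma}\, 2^{-k} \;=\; 2^{-m}\sum_{j=1}^{m-a}\bigl(2e^{-2\gamma}\bigr)^{j},$$
obtained by the substitution $j=m-k$. Setting $r:=2e^{-2\gamma}$, the behavior splits into three regimes. When $r<1$ (i.e.\ $\gamma>\tfrac{1}{2}\ln 2$) the inner sum is uniformly bounded, contributing a term of order $2^{-m}$. When $r>1$ the sum is of order $r^{m-a}/(r-1)$, which combines with the prefactor $2^{-m}$ to produce a term of order $2^{-a}e^{-2\gamma(m-a)}\le e^{-2\gamma(m-a)}$. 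The borderline case $r=1$ is precisely what forces the factor $m$ in the final bound: there the inner sum equals $m-a$, giving a term of order $m\cdot 2^{-m}$. Assembling the three cases yields the announced estimate $Cm\bigl(e^{-2\gamma(m-a)}+2^{-m}\bigr)$ with $C$ depending only on $\gamma$.
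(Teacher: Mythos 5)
Your proposal is correct and takes essentially the same approach as the paper: split the covariance at the splitting time $a$, compute $\mathtt{Cov}(X_k^i,X_k^j)=-2^{-k}(e^{2\gamma}-1)/(2\gamma)$ for distinct post-split ancestors, and estimate the resulting geometric sums. The only organizational difference is that you unwind the recursion all the way to $k=0$ and bound the pre-split sum directly, whereas the paper stops at $k=a$ and cites Proposition \ref{Variance Theorem} for $\mathtt{Var}(Y_a^i)$; since that proposition is itself proved by the same full unwinding, the two arguments coincide.
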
 

\begin{proof}
\noindent Since  $Y_{m+1}^i = e^{-\gamma}(Y_m^i \bigoplus X_m^i)$ for any `relative particle,' it follows by recursion that 
$$Y_{m}^i = e ^{-\gamma(m-a)}Y_a^i + \sum\limits_{n=a}^{m-1}e^{-\gamma(m-n)}X_n^i.$$
Here $X_n^i$ is the process as in \eqref{x.part} on the unit time interval $[n,n+1]$. From the preceding formula we can easily  compute 
$\mathtt {Cov}(Y_m^i,Y_m^j)$, once we  know the corresponding $\mathtt {Cov}(X_n^i,X_n^j)$ for $n=a,a+1,...,m-1$. For the $i$th particle ($i$ can be replaced by $j$, of course), 
~\\
$$X_n^i = \int\limits_{n}^{n+1}e^{\gamma(s-n)} \mathrm{d}(\sigma_m \widetilde{W}_s^i),$$ 
where $\sigma_n \widetilde{W}_s^i= -\frac{1}{2^n}\bigoplus_{k \ne i}W_s^k \oplus \sigma_n^2 W_s^i$ and $\{W_s^k, k = 1,2,3,\cdots,2^n\}$, as well as $\widetilde{W}^i$  are standard independent Brownian motions on the unit time interval $[n,n+1]$.

Using that the $\{W_s^k, k = 1,2,\cdots, 2^n\}$ are  independent, $\mathtt {Cov}(X_n^i,X_n^j)$ can now be computed. Indeed, the stochastic integrals $X_n^i,X_n^j$ are just linear combinations of the stochastic integrals $A_k := \int\limits_{n}^{n+1} e^{\gamma(s-n)} \, \mathrm{d} W_s^k, k = 1,2,3,\cdots$, that is,
\begin{eqnarray*}
& X_n^i = -\frac{1}{2^n}\sum_{k \ne i} A_k + \sigma_n A_i; \\
&X_n^j = -\frac{1}{2^n}\sum_{k \ne j} A_k + \sigma_n A_j.
\end{eqnarray*}
Note that, by the 
It\^{o}-isometry,
$$\mathtt {Var}(A_k) = \frac{e^{2\gamma}-1}{2\gamma}.$$
Therefore, using the independence of the $A_k$'s,
\begin{align}
&\mathtt {Cov}(X_n^i,X_n^j)\nonumber \\
        &\ = \mathtt {Cov}\left(-\frac{1}{2^n}\sum_{k \ne i} A_k + \left(1-\frac{1}{2^n}\right)A_i,-\frac{1}{2^n}\sum_{k \ne j} A_k + \left(1-\frac{1}{2^n}\right)A_j\right)\nonumber\\
        &\ = \frac{1}{2^{2n}}\sum\limits_{k \ne i,j}\mathtt {Var}(A_k) - \frac{1}{2^n}\left(1-\frac{1}{2^n}\right)\mathtt {Var}(A_i) - \frac{1}{2^n} \left(1-\frac{1}{2^n}\right) \mathtt {Var}(A_j)\nonumber \\
        &\ =\left ( \frac{1}{2^{2n}}\left(2^n-2\right) - \frac{2}{2^n}\left(1-\frac{1}{2^n}\right)\right)\frac{e^{2\gamma}-1}{2\gamma}\nonumber \\
        &\ = -\frac{1}{2^n} \frac{e^{2\gamma}-1}{2\gamma}.\nonumber \\
\end{align}\label{hosszu}
Since  $Y_{m}^i = e ^{-\gamma(m-a)}Y_a^i + \sum\limits_{n=a}^{m-1}e^{-\gamma(m-n)}X_n^i$, and the $\{X_k^i\}$
are independent for different $k$'s and finally,  since $Y_a^i = Y_a^j$, it follows that\footnote{In this formula, $i>2^k$ may occur. Then $X_k^i$ stands for the normal distribution associated with the ancestor of $i$ 
at the time $k$. The normal distribution is defined in the same way as for $X_n^i$.}
\begin{equation}\label{cov.Ys}
\mathtt {Cov}(Y_m^i,Y_m^j) = e^{-2\gamma(m-a)}\mathtt {Var}(Y_a^i) + \sum\limits_{k=a}^{m-1}e^{-2\gamma(m-k)} \mathtt {Cov}(X_k^i,X_k^j).
\end{equation}
According to Proposition \ref{Variance Theorem},  $$\mathtt {Var}(Y_a^i) = \frac{e^{2\gamma}-1}{2\gamma}\,e^{-2\gamma(a-1)}\left[\frac{1-e^{2\gamma(a-1)}}{1-e^{2\gamma}} - \frac{1-(\frac{1}{2}e^{2\gamma})^{a-1}}{2-e^{2\gamma}}\right].$$ Using this along with \eqref{hosszu} and \eqref{cov.Ys}, one obtains that
$$\mathtt {Cov}(Y_m^i,Y_m^j) =\frac{e^{2\gamma}-1}{2\gamma}\,\Bigg(e^{-2\gamma(m-1)}\left[\frac{1-e^{2\gamma(a-1)}}{1-e^{2\gamma}} - \frac{1-(\frac{1}{2}e^{2\gamma})^{a-1}}{2-e^{2\gamma}}\right]$$
$$+  \sum\limits_{k=a}^{m-1}e^{-2\gamma(m-k)} \left(-\frac{1}{2^k}\right)\Bigg).$$
It then follows that 
$$ \mathtt {Cov}(Y_m^i,Y_m^j) < C_0(I_1+I_2),$$
where $$I_1 = e^{-2\gamma(m-1)}\left[\frac{1-e^{2\gamma(a-1)}}{1-e^{2\gamma}} - \frac{1-(\frac{1}{2}e^{2\gamma})^{a-1}}{2-e^{2\gamma}}\right],$$
 $$I_2 =\sum\limits_{k=a}^{m-1}e^{-2\gamma(m-k)} \left(-\frac{1}{2^k}\right),$$ and $C_0 =C_0(\gamma):=\frac{e^{2\gamma}-1}{2\gamma}.$
 As $\gamma>0$, one has $|I_1| < C_1e^{-2\gamma(m-a)}$, where $C_1 $ is a constant which only depends on $\gamma$. 
 Moreover, $I_2 = e^{-2m\gamma}\sum\limits_{k=a}^{m-1}\left(-(\frac{e^{2\gamma}}{2})^k \right).$ Then, by an easy computation,  
 $|I_2| < C_2(m\,e^{-2m\gamma} + \frac{1}{2^m})$, were $C_2$ is also a constant which only depends on $\gamma$.
 
Hence, $\mathtt {Cov}(Y_m^i,Y_m^j)$ is bounded from above by $$C_0\left(C_1e^{-2\gamma(m-a)} + C_2\left(m\,e^{-2m\gamma} + \frac{1}{2^m}\right)\right) < C\,m(e^{-2\gamma(m-a)} + 2^{-m} ),$$
where $C$ is a constant that only depends on $\gamma$.\end{proof}

\medskip
 Recall that our goal is to prove the existence of  $\lim\limits_{m\to\infty}2^{-m}\sum\limits_{i=1}^{2^m}1_B(Y_m^i)$ and identify it. To achieve this, we will use the Borel-Cantelli Lemma in conjunction with the Chebysev inequality, and  so we need to estimate $\mathtt {Var}\left(2^{-m}\sum\limits_{i=1}^{2^m}1_B(Y_m^i)\right)$.
Clearly,
$$\mathtt {Var}\left(\sum\limits_{i=1}^{2^m}1_B(Y_m^i)\right) = \sum\limits_{i=1}^{2^m}\mathtt {Var}(1_B(Y_m^i)) + \mathop{\sum_{i=1}^{2^m}\sum_{j=1}^{2^m}}_{i\ne j}\mathtt {Cov}(1_B(Y_m^i),
1_B(Y_m^j)),$$
and
$$\mathtt {Var}(1_B(Y_m^i)) = P(Y_m^i\in B) - P^2(Y_m^i \in B) \le  1/4.$$

\noindent We  need to compute $\mathtt {Cov}(1_B(Y_m^i),1_B(Y_m^j))$ for $i\neq j$, and to do that, we need to analyze  the splitting time (still denoted  by $a$) for the particles 
$Y_m^i$ and 
$Y_m^j$. The following lemma on joint normal distribution will be useful.

\begin{lemma}[Covariance for indicators]
\label{Cov control}
Let  $(X,Y)$ be a joint normal vector such that its marginals $X$ and $Y$ are standard normal, and denote $\rho:=\mathtt {Cov}(X,Y)$. Then there exists an absolute  constant $C>0$ such that $$\mathtt {Cov}(1_B(X),1_B(Y)) \le C |\rho|$$ holds for all Borel sets $B\subset \mathbb R$ and all $|\rho| < 1/2$.
\end{lemma}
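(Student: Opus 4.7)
The plan is to use the $L^2(\phi)$ Hermite/Mehler decomposition of the bivariate Gaussian pair $(X,Y)$, which is the natural framework for such a covariance estimate. Let $\phi$ denote the standard normal density and let $\{H_n\}_{n\ge 0}$ be the probabilists' Hermite polynomials, orthogonal in $L^2(\phi)$ with $E[H_n(X)^2]=n!$ and $E[H_n(X)]=0$ for $n\ge 1$. Since $\{H_n/\sqrt{n!}\}$ is a complete orthonormal basis of $L^2(\phi)$ and $1_B$ is bounded hence in $L^2(\phi)$, one has the expansion
$$1_B(X) = P(X\in B) + \sum_{n\ge 1}\frac{c_n}{n!}H_n(X),\qquad c_n:=\int_B H_n(x)\phi(x)\,\mathrm{d}x,$$
with Parseval identity $\sum_{n\ge 0}c_n^2/n! = E[1_B(X)^2] = P(X\in B)\le 1$.

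The key ingredient is the classical identity $E[H_n(Y)\mid X]=\rho^n H_n(X)$, which I would verify from the Hermite generating function $\sum_{n\ge 0}(t^n/n!)H_n(y)=e^{ty-t^2/2}$ together with the conditional law $Y\mid X\sim\mathcal{N}(\rho X,\,1-\rho^2)$: computing $E[e^{tY-t^2/2}\mid X]=e^{t\rho X - t^2\rho^2/2}$ and matching coefficients in $t$ yields the identity. Applying it termwise to the Hermite expansion of $1_B(Y)$ gives
$$E[1_B(Y)\mid X] = P(X\in B) + \sum_{n\ge 1}\frac{c_n\rho^n}{n!}H_n(X),$$
and hence, by the tower property and orthogonality,
$$\mathtt {Cov}(1_B(X),1_B(Y)) = \mathtt {Cov}\bigl(1_B(X),\,E[1_B(Y)\mid X]\bigr) = \sum_{n\ge 1}\frac{\rho^n c_n^2}{n!}.$$

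Finally, by Cauchy--Schwarz $c_n^2 = (E[1_B(X)H_n(X)])^2 \le P(X\in B)\cdot E[H_n(X)^2] \le n!$, so
$$|\mathtt {Cov}(1_B(X),1_B(Y))| \le \sum_{n\ge 1}|\rho|^n = \frac{|\rho|}{1-|\rho|} \le 2|\rho|$$
whenever $|\rho|<1/2$, giving the lemma with $C=2$. The only genuine technical point I anticipate is the termwise passage to conditional expectations: one must confirm that the Hermite expansion of $1_B$ converges in $L^2$ of the joint measure so that conditional expectation acts term by term, but this follows from Parseval and the fact that $\{\rho^n\}_{n\ge 0}$ is bounded; everything else reduces to the three-line computation above.
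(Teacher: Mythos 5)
Your proof is correct, but it takes a genuinely different route from the paper's. The paper argues by direct calculus: it writes $\psi(\rho,B):=\mathtt{Cov}(1_B(X),1_B(Y))$ as a double integral of the difference between the bivariate and product Gaussian densities, observes $\psi(0,B)=0$, differentiates under the integral sign in $\rho$, bounds $\left|\partial_\rho f(x,y,\rho)\right|$ on $\left|\rho\right|\le 1/2$ by an integrable Gaussian majorant $Ce^{-(x^2+y^2)/4}(1+x^2+y^2)$ uniformly in $B$, and integrates from $0$ to $\rho$. Your argument instead diagonalizes the conditional expectation operator via the Hermite/Mehler identity $E[H_n(Y)\mid X]=\rho^n H_n(X)$, obtaining the exact formula $\mathtt{Cov}(1_B(X),1_B(Y))=\sum_{n\ge 1}\rho^n c_n^2/n!$ and then bounding by Cauchy--Schwarz and a geometric series. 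Each approach has its advantages: the paper's is fully self-contained, needing nothing beyond differentiation under the integral sign and a crude Gaussian tail bound, and transparently generalizes to non-indicator test functions; yours is more structural, produces a closed-form series for the covariance, and yields a sharp explicit constant --- indeed, since $\left|\rho\right|^n\le\left|\rho\right|$ for $n\ge 1$ and Parseval gives $\sum_{n\ge 1}c_n^2/n!\le P(X\in B)\le 1$, your computation actually shows $\left|\mathtt{Cov}(1_B(X),1_B(Y))\right|\le\left|\rho\right|$, i.e.\ $C=1$ works, for all $\left|\rho\right|<1$ with no need for the restriction $\left|\rho\right|<1/2$. The only point that deserves a sentence of justification (which you flag) is the termwise application of $E[\,\cdot\mid X]$; this is immediate since conditional expectation is an $L^2$-contraction and the Hermite series of $1_B(Y)$ converges in $L^2$ of the joint law, so the step is sound.
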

\begin{proof}
\noindent Plugging in the joint and marginal densities  $$\frac{1}{2\, \pi\sqrt{1-\rho^2}}e^{-\frac{x^2+y^2+2\rho xy}{2(1-\rho^2)}};\ \frac{1}{\sqrt{2\pi}} e^{-\frac{x^2}{2}},$$ one obtains
\begin{eqnarray*}
\psi(\rho,B)&:=&\mathtt {Cov}(1_B(X),1_B(Y)) \\
&=&  P(X\in B, Y \in B) - P(X\in B)\cdot P(Y\in B)\\
&=& \frac{1}{2 \pi} \iint_{B\times B} \frac{1}{\sqrt{1-\rho^2}}e^{-\frac{x^2+y^2+2\rho xy}{2(1-\rho^2)}} - e^{\frac{-x^2-y^2}{2} }\, \mathrm{d}x\mathrm{d}y\\
&=:& \frac{1}{2 \pi} \iint_{B\times B} f(x,y,\rho)\, \mathrm{d}x\mathrm{d}y.
\end{eqnarray*}
Clearly, 
$\psi(0,B) = 0$. Since $f\in C^{\infty}(\mathbb R\times\mathbb R\times (-1/2,1/2))$, 
$$\psi'(\rho,B) = \frac{1}{2\, \pi}\iint_{B\times B}\frac{\partial f(x,y,\rho)}{\partial \rho} \, \mathrm{d}x\mathrm{d}y,$$
where
\begin{equation*}
\frac{\partial f(x,y,\rho)}{\partial \rho} =
e^{-\frac{x^2+y^2+2\rho xy }{2(1-\rho^2)}}\cdot\left(\frac{\rho}{(1-\rho^2)^{\frac{3}{2}}}  
-\frac{1}{\sqrt{1-\rho^2}} \frac{k(x,y,\rho)}{4(1-\rho^2)^2}\right),
\end{equation*}
with $k(x,y,\rho):=2xy 2(1-\rho^2)+ 4\rho (x^2+y^2+2\rho xy).$
For $|\rho| \le \frac{1}{2}$, we have $ \frac{x^2+y^2}{2}\le x^2+y^2+2\rho xy,$ i.e. $x^2+y^2+4\rho xy\ge 0$, and  thus,
$$ \exp\left\{-\frac{x^2+y^2+2\rho xy }{2(1-\rho^2)}\right\} \le \exp\left\{-\frac{x^2+y^2}{4}\right\}.$$
Also, $k(x,y,\rho) \le 2(x^2 + y^2) + 2(2(x^2 + y^2))\le 6(x^2 + y^2)$.
Hence, with some $C>0$ constant,
$$\left|\frac{\partial f(x,y,\rho)}{\partial \rho}\right| \le Ce^{-\frac{x^2+y^2}{4}}(1+x^2+y^2),\ \forall |\rho|\le 1/2.$$
Consequently,
\begin{eqnarray*}
|\psi'(\rho,B)| &\le\frac{1}{2\, \pi}\iint_{B\times B} Ce^{-\frac{x^2+y^2}{4}}(1+x^2+y^2) \, \mathrm{d}x\mathrm{d}y\\
&\le\frac{1}{2\, \pi}\iint_{\mathbb{R}\times \mathbb{R}} Ce^{-\frac{x^2+y^2}{4}}(1+x^2+y^2) \, \mathrm{d}x\mathrm{d}y\\
&\le C\int_0^{\infty} e^{-r^{2}/4}\,r(r^2+1) dr =:C'<\infty,
\end{eqnarray*}  
yielding
$$|\psi(\rho,B)| = \left|\int\limits_{0}^{\rho} \psi'(s,B) ds\right| \le \int\limits_{0}^{|\rho|} |\psi'(s,B)| ds \le C' |\rho|.$$
\end{proof}
\begin{remark} \rm More generally,  let both $X$ and $Y$ be  $\mathcal{N}(0,\sigma^2)$-distributed. If $(X,Y)$ is joint normal with $\mathtt {Cov}(X,Y) = \rho$,
then we can scale $X$ and $Y$ to use Lemma \ref{Cov control}.
Indeed, $\frac{X}{\sigma}$ and  $\frac{Y}{\sigma}$ are then standard normal variables, and  $\mathtt {Cov}( \frac{X}{\sigma}, \frac{Y}{\sigma}) = \frac{\rho}{\sigma^2}$.
From Lemma \ref{Cov control}, if $\frac{|\rho|}{\sigma^2}<1/2$ ($2|\rho|<\sigma^2$), then
\begin{equation}\label{even.better}
\mathtt {Cov}(1_B(X),1_B(Y))= \mathtt {Cov}\left(1_{\frac{B}{\sigma}}\left(\frac{X}{\sigma}\right), 1_{\frac{B}{\sigma}}\left(\frac{Y}{\sigma}\right)\right)\le \frac{C|\rho|}{\sigma^2}.
\end{equation}$\hfill\diamond$\end{remark}
Returning to the question of covariances with splitting time $a$, note that $\{Y_m^i\}$ are linear combinations of a number of underlying Brownian motions; hence $(Y_m^i,Y_m^j)$ will be joint normal.
From Theorem \ref{Cov theorem}, we know that $\mathtt {Cov}(Y_m^i,Y_m^j) \le C(\gamma)m(e^{-2\gamma(m-a)}+2^{-m})$.  From \eqref{even.better}, we then  have the following corollary.
\begin{corollary}\label{cov.coroll} With some constant $C>0$ (that depends only on $\gamma$),
  $$\mathtt {Cov}\left(1_B(Y_m^i),1_B(Y_m^j) \right)\le \frac{Cm(e^{-2\gamma(m-a)}+2^{-m})}{\sigma^2(Y_m^i)},$$
  provided $2m(e^{-2\gamma(m-a)}+2^{-m})<\sigma^2(Y_m^i).$
   (Of course, $\sigma^2(Y_m^i) = \sigma^2(Y_m^j).$)
\end{corollary}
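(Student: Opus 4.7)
The plan is to derive this as a direct corollary of Theorem \ref{Cov theorem} combined with the rescaled version of Lemma \ref{Cov control} recorded in equation \eqref{even.better}. The key preliminary observation is that $(Y_m^i, Y_m^j)$ is jointly Gaussian: iterating the recursion $Y_{m+1}^i = e^{-\gamma}(Y_m^i \oplus X_m^i)$ (together with $Y_0^i = 0$) exhibits each $Y_m^i$ as a linear combination of the Gaussian variables $\{X_n^\cdot\}_{n<m}$, each of which is a stochastic integral of a deterministic integrand against Brownian motion. Hence the pair sits inside a multivariate normal vector, and by the symmetry of the relative system the two marginals share a common variance $\sigma^2 := \mathtt{Var}(Y_m^i) = \mathtt{Var}(Y_m^j)$.

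Now set $\rho := \mathtt{Cov}(Y_m^i, Y_m^j)$. Theorem \ref{Cov theorem} supplies $|\rho| \le C_\gamma\, m\bigl(e^{-2\gamma(m-a)} + 2^{-m}\bigr)$ for some constant $C_\gamma$ depending only on $\gamma$. Under the standing hypothesis $2m\bigl(e^{-2\gamma(m-a)} + 2^{-m}\bigr) < \sigma^2(Y_m^i)$ (with the threshold constant $2$ interpreted modulo the harmless multiplicative factor $C_\gamma$ that will be absorbed into the final constant $C$ of the corollary), we have $2|\rho| < \sigma^2$, which is precisely the regime in which \eqref{even.better} is applicable. Feeding $(X,Y) := (Y_m^i, Y_m^j)$ into \eqref{even.better} then yields
$$
\mathtt{Cov}\bigl(1_B(Y_m^i),\, 1_B(Y_m^j)\bigr) \;\le\; \frac{C\,|\rho|}{\sigma^2} \;\le\; \frac{C\, m\bigl(e^{-2\gamma(m-a)} + 2^{-m}\bigr)}{\sigma^2(Y_m^i)},
$$
with $C$ a new constant depending only on $\gamma$; this is the claimed bound.

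Since both ingredients are already in hand, there is no genuine obstacle to overcome. The only bookkeeping point, which I would flag explicitly in the writeup, is the tracking of the universal constant through the rescaling $X/\sigma,\, Y/\sigma$: one has to verify that the hypothesis of the corollary indeed implies the condition $2|\rho| < \sigma^2$ required by \eqref{even.better}, and that this absorption of $C_\gamma$ is legitimate because everything can be folded into the single $\gamma$-dependent constant $C$ appearing in the statement.
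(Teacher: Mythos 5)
Your proposal is correct and matches the paper's own (brief) derivation: the authors likewise observe that $(Y_m^i,Y_m^j)$ is jointly Gaussian, invoke Theorem~\ref{Cov theorem} for the covariance bound, and feed the result into the rescaled estimate \eqref{even.better}. The bookkeeping caveat you flag about the constant~$2$ versus $2C_\gamma$ in the hypothesis is a genuine (if minor) looseness in the paper's statement, and is handled in the same spirit there — in the application (see \eqref{cov.and.var}) the condition actually checked is $\mathtt{Cov}(Y_m^i,Y_m^j)/\mathtt{Var}(Y_m^i)\le 1/2$, which is the precise precondition of \eqref{even.better}.
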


\begin{remark} \rm From Proposition \ref{Variance Theorem}, it follows that $\lim\limits_{m\to \infty}\sigma^2(Y_m^i) = \frac{1}{2\gamma}$. Therefore, if we write $a(m)$ in place of $a$, and $\lim_{m\to\infty} (m-a(m))=\infty$, then the condition in Corollary \ref{cov.coroll} will always be true for large enough $m$'s.
$\hfill\diamond$\end{remark}
\subsection{Strong Law for the relative system; $d=1$} 
Now we are ready to prove the following theorem.


\begin{theorem} [SLLN; one dimension]\label{SLLN.thm}
Assume that $d=1, \gamma + b>0$ and let   $B\subset \mathbb R^d$ be a Borel set. Then, almost surely, $$\lim\limits_{m\to \infty} \left(2^{-m}\sum\limits_{i=1}^{2^m}1_B(Y_m^i) - P(Y_m^i\in B)\right) = 0.$$ 
\end{theorem}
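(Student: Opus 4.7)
The plan is to apply Chebyshev's inequality together with the Borel-Cantelli lemma to the centered sequence
$$S_m := 2^{-m}\sum_{i=1}^{2^m} 1_B(Y_m^i) - P(Y_m^i \in B).$$
By the symmetry of the model the $Y_m^i$ are identically distributed, so $E[S_m] = 0$. It will suffice to establish $\sum_m \mathtt{Var}(S_m) < \infty$: applying Chebyshev with $\varepsilon = 1/k$ for each $k \in \mathbb{N}$ and then intersecting over $k$ yields $S_m \to 0$ a.s. By Assumption \ref{nodriftass} I may work with $b = 0$, so the hypothesis becomes $\gamma > 0$.

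Decomposing
$$\mathtt{Var}(S_m) = 2^{-2m}\left[\sum_{i} \mathtt{Var}(1_B(Y_m^i)) + \sum_{i \ne j} \mathtt{Cov}(1_B(Y_m^i), 1_B(Y_m^j))\right],$$
the diagonal contribution is trivially at most $2^m/4$ and hence contributes the summable $O(2^{-m})$. To control the off-diagonal sum I would group ordered pairs $(i,j)$ according to their splitting time $a$. A direct counting argument in the binary genealogy (each ancestor alive on $[a-1, a)$ contributes $2 \cdot (2^{m-a})^2$ ordered pairs of distinct descendants at time $m$) produces exactly $2^{2m-a}$ pairs per $a \in \{1, \ldots, m\}$, with the expected total $\sum_a 2^{2m-a} = 2^m(2^m-1)$ as a sanity check.

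For each such pair I would apply Corollary \ref{cov.coroll} whenever the condition $2m(e^{-2\gamma(m-a)} + 2^{-m}) < \sigma^2(Y_m^i)$ is in force; since Proposition \ref{Variance Theorem} gives $\sigma^2(Y_m^i) \to 1/(2\gamma) > 0$, this regime covers all $a$ with $m - a$ above a threshold of order $\log m$. For the remaining $O(\log m)$-sized window of $a$ near $m$ the trivial bound $\mathtt{Cov}(\cdot, \cdot) \le 1/4$ is adequate and, after dividing by $2^{2m}$, contributes $O(2^{-m} m^{1/(2\gamma)})$ to $\mathtt{Var}(S_m)$, which is summable. In the main regime, the change of variable $k := m - a$ reduces the problem to evaluating a geometric sum $\sum_k (2 e^{-2\gamma})^k$; the case analysis according to whether $\gamma$ is above or below $(\ln 2)/2$ produces contributions of order $m \cdot 2^{-m}$ or $m\, e^{-2\gamma m}$ respectively, both summable in $m$ since $\gamma > 0$.

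The main obstacle is just the bookkeeping: choosing the threshold $a = a^*(m)$ that separates the sharp and trivial covariance estimates, and tracking the case analysis at the critical rate $\gamma = (\ln 2)/2$ where the geometric ratio $2 e^{-2\gamma}$ crosses $1$. Once $\sum_m \mathtt{Var}(S_m) < \infty$ is established, Chebyshev followed by Borel-Cantelli delivers the claim.
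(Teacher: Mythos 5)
Your proposal is correct and follows essentially the same route as the paper: Chebyshev plus Borel--Cantelli reduces everything to $\sum_m \mathtt{Var}(S_m)<\infty$, the variance is decomposed into diagonal and off-diagonal terms, the off-diagonal pairs are stratified by MRCA splitting time, and the covariance bound (Corollary~\ref{cov.coroll} via Lemma~\ref{Cov control}) is applied away from the `close-relative' window while the trivial bound $\mathtt{Cov}\le 1/4$ handles the rest. The only real deviation is that you cut at $m-a\asymp \log m$ (the sharpest threshold for which the hypothesis of Corollary~\ref{cov.coroll} kicks in), whereas the paper cuts more crudely at $a=m/2$; both yield summable contributions, so your refinement is fine but unnecessary, and your exponent $m^{1/(2\gamma)}$ for the near window should really be $m^{\log 2/(2\gamma)}$ — a harmless slip since $2^{-m}$ beats any polynomial.
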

Here, of course $\lim_{m\to\infty}P(Y_m^i\in B)$ exists. That is,
similarly to  \cite{Englander},  $\lim\limits_{m\to\infty}2^{-m} Y_m(\,\mathrm{d}y) = f(y)\, \mathrm{d}y$ a.s. in the weak topology, where  $f$ is the density  for  $\mathcal{N}\left(0,\frac{1}{2\gamma}\right)$.\\
\begin{proof}
Recall Assumption \ref{nodriftass}: we may and will assume that $b=0$ and $\gamma>0$. 

We need to show that for any $\epsilon > 0$, a.s., only finitely many of the events $$A_m := \left\{\left|2^{-m}\sum\limits_{i=1}^{2^m}1_B(Y_m^i) - E(1_B(Y_m^i))\right| >\epsilon\right\}$$
will occur.
By the Borel-Cantelli Lemma and Chebyshev's inequality, it is enough to show that  $$\sum\limits_{1}^{\infty}P(A_m) \le \epsilon^{-2}\, \sum\limits_{1}^{\infty}\mathtt {Var}\left(2^{-m}\sum\limits_{i=1}^{2^m}1_B(Y_m^i)\right)<\infty.$$ (Note that
$E\left(2^{-m}\sum\limits_{i=1}^{2^m}1_B(Y_m^i)\right) = E(1_B(Y_m^i))$.) Thus, it remains   to  show that
\begin{equation}\label{what.we.need}
\sum\limits_{m=1}^{\infty}\mathtt {Var}\left(2^{-m}\sum\limits_{i=1}^{2^m}1_B(Y_m^i)\right) < \infty.
\end{equation}
To this end, note that 
\begin{eqnarray}&\mathtt {Var}\left(2^{-m}\sum\limits_{i=1}^{2^m}1_B(Y_m^i)\right)= \frac{1}{2^{2m}}
\big( \sum\limits_{i=1}^{2^m}\mathtt {Var}(1_B(Y_m^i))\\ &\ \ \ + \mathop{\sum_{i=1}^{2^m}\sum_{j=1}^{2^m}}_{i\ne j}\mathtt {Cov}(1_B(Y_m^i),
1_B(Y_m^j))\big)\nonumber
\end{eqnarray} 

\noindent Since $\mathtt {Var}(1_B(Y_m^i)) \le 1/2 - 1/4 = 1/4 $, we have 
$$\mathtt {Var}\left(2^{-m}\sum\limits_{i=1}^{2^m}1_B(Y_m^i)\right)  \le \frac{1}{2^{2m}}\left( \frac{2^m}{4} + 2^m\sum\limits_{j\ne i} \mathtt {Cov}(1_B(Y_m^i), 1_B(Y_m^j))\right),$$ 
for a fixed $i$, since $i$ and $j$ are symmetric.

Hence, one needs to analyze $\sum\limits_{j\ne i} \mathtt {Cov}(1_B(Y_m^i),
1_B(Y_m^j))$  for a fixed $i$. We know that $\mathtt {Cov}(1_B(Y_m^i), 1_B(Y_m^j))$ depends on the time $a$ when  the MRCA  of these particles splits. We thus need to distinguish between `close relatives' and other pairs.

Notice that there are $2^k$ particles which have the MRCA at time $m-k$ with $Y_m^i$. From Theorem \ref{Cov theorem},
we know that $$\mathtt {Cov}(Y_{m}^j,Y_{m}^i )<Cm(e^{-2\gamma(m-a)}+2^{-m}),$$ if the MRCA of $i$ and $j$ is $a$.  If $a=a(m)=m/2$, (or just $m-a$ tends to $\infty$), then the righthand side converges to zero as $m\to\infty$.  As already seen,  $\lim\limits_{m\to \infty} \mathtt {Var}(Y_m^i) = \frac{1}{2\gamma}$, and
so we may apply Lemma \ref{Cov control} and the remark following it for  $a=a(m) \le  \frac{m}{2}$ and large $m$. That is, we could choose a large $N$ such that for all $m>N$, the condition
\begin{equation}\label{cov.and.var}
\frac{\mathtt {Cov}(Y_m^i, Y_m^j)}{\mathtt {Var}(Y_m^i)} \le \frac{1}{2},
\end{equation}
is satisfied.

Now, the important point is that  {\it the majority of particle-pairs have $a\le m/2$, that is, they are not `close' relatives.} Indeed,  the number of pairs with $a>m/2$ (close relatives) is $2^m(1+ 2+ 4 +\cdots + 2^{\lfloor\frac{m-1}{2}\rfloor})$. Simple computation yields that  $$2^m(1+ 2+ 4 +\cdots + 2^{\lfloor\frac{m-1}{2}\rfloor}) \le 2^m(2^{\lfloor\frac{m-1}{2}\rfloor + 1}) \le 2^{m+\frac{m}{2} + 1}.$$

As we know, for any pair, we have $\mathtt {Cov}(Y_m^i, Y_m^j) \le \mathtt {Var}(Y_m^i) \le \frac{1}{4}$. Thus, for all of those pairs with $a > m/2$, the total covariance will be controlled by $2^{m+\frac{m}{2} + 1}\cdot  \frac{1}{4} = 2^{m+\frac{m}{2} - 1}$.  Moreover, as discussed above, for the pairs with $a \le m/2$, we may apply  Lemma \ref{Cov control} and the remark following it, yielding
\begin{align*}
\sum\limits_{j\ne i} \mathtt {Cov}(1_B(Y_m^i),1_B(Y_m^j)) &\le \sum\limits_{k=\lfloor\frac{m+1}{2}\rfloor}^{m-1} 2^kCm(e^{-2\gamma k} + 2^{-m}) \\
&\le Cm\sum\limits_{k=1}^{m-1} \left(\left(\frac{2}{e^{2\gamma}}\right)^k + 2^{k-m}\right),
\end{align*}
where $C$ only depends on $N,\gamma,B$.
Consequently,
\begin{align*}
&\mathtt {Var}\left(2^{-m}\sum\limits_{i=1}^{2^m}1_B(Y_m^i)\right)\\
        &\ \ \le \frac{1}{4\times 2^m} +\frac{2^{\frac{m}{2} - 1}}{2^m}+ \frac{Cm}         {2^m}\left(\frac{1-(\frac{2}{e^{2\gamma}})^m}{1-\frac{2}{e^{2\gamma}}} + (1- 2^{-m})  \right)\\
        &\ \ \le \frac{C_1m}{2^m} + \frac{C_2m}{e^{2\gamma m}} + \frac{C_3m^2}{2^m} + \frac{1}{2^{\frac{m}{2}+1}}\\
        &\ \  \le \frac{C_0m}{e^{2\gamma m}} + \frac{C_0m^2}{2^m}+ \frac{1}{2^{\frac{m}{2}+1}}.
\end{align*}

\noindent Here $C_0$ is a constant which only depends on $N,\gamma,B$.
Given that $\gamma>0$ and the three terms on the righthand side are all summable in $m$,
\eqref{what.we.need} holds and the proof is complete.
\end{proof}
\subsection{Strong Law for the relative system; $d>1$}
We now show that the limit in Theorem \ref{SLLN.thm}  holds for any $d\ge 1$.  In fact, the  proof carries through, as  long as the covariance in high dimensions is controlled by its coordinates. The following lemma shows that the covariance between two indicator variables  is controlled by the covariance between the coordinate indicator variables.

\begin{lemma}[Control by coordinates]
\label{Le: con_coordinate}
Consider an open rectangle $B$ in $\mathbb{R}^d$,  that is, $B=B_1\times B_2\times B_3\cdots \times B_d$, where $B_i$ is an open interval in $\mathbb{R}$ for $i=1,2,...,d$. Let $X=(X_1,X_2,\cdots,X_d)$ and $Y=(Y_1,Y_2, \cdots, Y_d)$ be two random vectors in $\mathbb{R}^d$ satisfying that the pairs $(X_1,Y_1),(X_2,Y_2),...,(X_d,Y_d)$ are independent. Then $$|\mathtt{Cov}(1_B(X),1_B(Y))| \le \sum_{i=1}^d |\mathtt{Cov}(1_{B_i}(X_i),1_{B_i}(Y_i))|.$$
\end{lemma}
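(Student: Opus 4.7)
The plan is to exploit the product structure of $1_B$ together with the independence hypothesis in order to reduce the multi-dimensional covariance to a difference of products, and then to bound that difference by a telescoping identity.

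First, since $B=B_1\times\cdots\times B_d$ is a rectangle, we have the factorization
\[
1_B(X)=\prod_{i=1}^d 1_{B_i}(X_i),\qquad 1_B(Y)=\prod_{i=1}^d 1_{B_i}(Y_i).
\]
Set $p_i:=E[1_{B_i}(X_i)\,1_{B_i}(Y_i)]$ and $q_i:=E[1_{B_i}(X_i)]\,E[1_{B_i}(Y_i)]$, so that $p_i-q_i=\mathtt{Cov}(1_{B_i}(X_i),1_{B_i}(Y_i))$ and both $p_i,q_i\in[0,1]$. Using the assumed independence of the pairs $(X_i,Y_i)$ across $i$, the joint expectation factorizes:
\[
E[1_B(X)\,1_B(Y)]=\prod_{i=1}^d p_i,\qquad E[1_B(X)]\,E[1_B(Y)]=\prod_{i=1}^d q_i.
\]
Thus $\mathtt{Cov}(1_B(X),1_B(Y))=\prod_i p_i-\prod_i q_i$, and it only remains to bound the difference of two products of numbers in $[0,1]$.

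The second step is the standard telescoping identity
\[
\prod_{i=1}^d p_i-\prod_{i=1}^d q_i=\sum_{k=1}^d \Bigl(\prod_{i<k}p_i\Bigr)(p_k-q_k)\Bigl(\prod_{i>k}q_i\Bigr).
\]
Taking absolute values and using $p_i,q_i\in[0,1]$ (so each surrounding product is at most $1$), we obtain
\[
\Bigl|\prod_{i=1}^d p_i-\prod_{i=1}^d q_i\Bigr|\le \sum_{k=1}^d |p_k-q_k|=\sum_{k=1}^d |\mathtt{Cov}(1_{B_k}(X_k),1_{B_k}(Y_k))|,
\]
which is exactly the inequality claimed. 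There is no serious obstacle here; the only thing to be careful about is the factorization in the first step, which does rely on the independence of the \emph{pairs} $(X_i,Y_i)$ and not merely of $X$ and $Y$ separately.
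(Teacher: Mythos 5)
Your proof is correct and follows essentially the same route as the paper: both factorize the covariance as a difference of two products $\prod_i a_i - \prod_i b_i$ with $a_i, b_i \in [0,1]$ using the independence of the pairs, and then bound that difference by $\sum_i |a_i - b_i|$. The only cosmetic difference is that you write out the telescoping identity explicitly while the paper establishes the same inequality by induction on $d$, but the inductive step in the paper is exactly one step of your telescope.
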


\begin{proof}
One has
\begin{align*}
&\mathtt{Cov}(1_B(X),1_B(Y))\\ 
&\ = P(X\in B, Y\in B) - P(X\in B) P(Y\in B)\\
  &\ = P(X_1\in B_1,\cdots, X_d\in B_d,Y_1\in B_1,\cdots, Y_d\in B_d)\\
 &\ \ \ \ \ -P(X_1\in B_1,\cdots, X_d\in B_d) P(Y_1\in B_1,\cdots, Y_d\in B_d).
\end{align*}
Using the assumption,
\begin{align*}
P(X_1\in B_1,\cdots,& X_d\in B_d) \\
&=P(X_1\in B_1) P(X_2\in B_2) \cdots P(X_d\in B_d);\\
P(Y_1\in B_1,\cdots,& Y_d\in B_d)\\
&=P(Y_1\in B_1) P(Y_2\in B_2) \cdots P(Y_d\in B_d),
\end{align*}
and
\begin{align*} &P(X_1\in B_1,\cdots, X_d\in B_d,Y_1\in B_1,\cdots, Y_d\in B_d) \\
&=P(X_1\in B_1,Y_1\in B_1) P(X_2\in B_2,Y_2\in B_2) \cdots P(X_d\in B_d, Y_d\in B_d).\end{align*}
Using the shorthands $a_i := P(X_i\in B_i,Y_i\in B_i), b_i := P(X_i\in B_i)  P(Y_i\in B_i)$,  one has $\mathtt{Cov}(1_{B_i}(X_i), 1_{B_i}(Y_i)) = a_i - b_i $, and from the computation above,
$$\mathtt{Cov}(1_B(X),1_B(Y)) = a_1 a_2 \cdots a_d - b_1 b_2 \cdots b_d.$$
Therefore the statement becomes 
$$|a_1 a_2 \cdots a_d - b_1 b_2 \cdots b_d| \le  |a_1-b_1| + |a_2-b_2| + \cdots + |a_d-b_d|.$$
Use that $0\le a_i \le 1$ and $0\le b_i \le 1$ and induction on $d$ as follows.
The statement is true for $d=1$, and if it is true for some $d\ge 1$, then
\begin{align*}
&|a_1 a_2 \cdots a_d a_{d+1} - b_1 b_2 \cdots b_d b_{d+1}| \\
=& |(a_1 a_2 \cdots a_d -\cdots - b_1 b_2 \cdots b_d)a_{d+1}+(a_{d+1}-b_{d+1})b_1b_2...b_d|\\
\le &|(a_1 a_2 \cdots a_d -\cdots - b_1 b_2 \cdots b_d)a_{d+1}|+|(a_{d+1}-b_{d+1})b_1b_2...b_d|\\
\le &|(a_1 a_2 \cdots a_d -\cdots - b_1 b_2 \cdots b_d)|+|a_{d+1}-b_{d+1}|\\
\le &|a_1-b_1| + |a_2-b_2| + \cdots + |a_d-b_d|+|a_{d+1}-b_{d+1}|,
\end{align*}
and so it is also true for $d+1$.
\end{proof}
The above lemma immediately yields the following corollary.
\begin{corollary}\label{SLLN.multidim}
Theorem \ref{SLLN.thm}  holds for $d>1$ as well. 
\end{corollary}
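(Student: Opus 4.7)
The plan is to lift the one-dimensional SLLN of Theorem \ref{SLLN.thm} to arbitrary $d \ge 1$ by combining the coordinate-wise independence of the relative system with the covariance decomposition provided by Lemma \ref{Le: con_coordinate}, and verifying convergence on the countable family $\mathcal{R}$ of bounded rational rectangles before invoking Lemma \ref{PB} to conclude.

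First I would observe that, since each driving Brownian motion $W^{i,m}$ has independent coordinates and the SDEs for the particles split into $d$ uncoupled scalar equations (the drift $-(\gamma+b)Y_t^i\,\mathrm{d}t$ is diagonal), the $k$-th coordinate system $\{Y_m^{i,(k)}\}_{i=1}^{2^m}$ is, for each fixed $k$, distributed as the one-dimensional relative system, and the $d$ such systems are jointly independent — they share only the genealogical tree, which in our unit-time branching model is deterministic. Consequently, for any pair $i \neq j$ the coordinate pairs $\{(Y_m^{i,(k)}, Y_m^{j,(k)})\}_{k=1}^d$ are independent, so the hypothesis of Lemma \ref{Le: con_coordinate} is met for every rectangle.

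Next I would fix a bounded rational rectangle $B = B_1 \times \cdots \times B_d \in \mathcal{R}$ and repeat the variance estimate from the proof of Theorem \ref{SLLN.thm}. Lemma \ref{Le: con_coordinate} yields
$$|\mathtt{Cov}(1_B(Y_m^i), 1_B(Y_m^j))| \le \sum_{k=1}^d |\mathtt{Cov}(1_{B_k}(Y_m^{i,(k)}), 1_{B_k}(Y_m^{j,(k)}))|,$$
and each summand on the right can be controlled exactly as in the one-dimensional case: splitting by MRCA time $a \le m/2$ versus $a > m/2$, invoking Corollary \ref{cov.coroll} in the former regime and the trivial bound $|\mathtt{Cov}| \le 1/4$ in the latter. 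Summing over $k$ multiplies the resulting bound by at most $d$, preserving summability in $m$. Chebyshev together with Borel-Cantelli then yield the pointwise SLLN for that particular $B$.

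Finally, since $\mathcal{R}$ is countable, intersecting the almost sure events over all $B \in \mathcal{R}$ produces a single full-measure event on which $2^{-m}\sum_{i=1}^{2^m} 1_B(Y_m^i) - P(Y_m^i \in B) \to 0$ for every $B \in \mathcal{R}$ simultaneously. Because the limit law is the product Gaussian $\mathcal{N}(0,(2\gamma)^{-1} I_d)$, which is absolutely continuous with respect to Lebesgue measure, Lemma \ref{PB} upgrades this convergence of rectangle masses to weak convergence of the empirical measure $2^{-m} Y_m$ to its Gaussian limit, recovering the conclusion of Theorem \ref{SLLN.thm} in dimension $d$. The one genuine point to verify is the independence-across-coordinates assertion in the first step; once that is in hand, the one-dimensional estimates transfer verbatim with only an extra factor of $d$, so there is no real obstacle beyond bookkeeping.
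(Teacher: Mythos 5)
Your proposal is correct and follows essentially the same route as the paper. The paper reduces the $d$-dimensional case to $d=1$ via Lemma \ref{Le: con_coordinate} and simply states that the lemma ``immediately yields'' the corollary; you supply the details that the paper leaves implicit (coordinate-wise independence of the relative system so the hypothesis of Lemma \ref{Le: con_coordinate} holds, the factor-$d$ overhead in the variance bound which preserves summability, restriction to the countable family $\mathcal{R}$, and the final appeal to Lemma \ref{PB}), all of which are sound.
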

\section{The distribution of the particle system}
\noindent Now we have collected enough information to describe the large time behavior  the system as a whole. 
\subsection{Preparation}
Below we describe the system's behavior as it depends on the parameters $\gamma, b$. 
The statements about the large time behavior of the branching particle system will follow from the behavior of the center of mass (Theorem \ref{Th:center} and Theorem \ref{escape}), that of the relative system (Theorem \ref{SLLN.thm} and Corollary \ref{SLLN.multidim}), and finally, from the following proposition.
\begin{proposition}[Independence] \label{indep.prop}
The tail $\sigma$-algebra $\mathcal{T}$ of $\overline{Z}$ is independent of the relative system $Y$.
\end{proposition}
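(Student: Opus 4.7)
The plan is to prove the stronger statement that the full $\sigma$-algebras $\sigma(\overline{Z})$ and $\sigma(Y)$ are independent, from which independence of the tail $\mathcal{T} \subseteq \sigma(\overline{Z})$ with $Y$ follows immediately. The key structural observation is that, on each interval $[m,m+1)$, the $2^m$ driving Brownian motions $W^{i,m}$ admit an orthogonal decomposition in $\mathbb{R}^{2^m}$: a one-dimensional ``center-of-mass component'' along the direction $2^{-m/2}(1,\ldots,1)$, and a complementary ``relative component'' in the orthogonal hyperplane.

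First I would recall the construction already introduced in the previous sections: the scalar process $\widetilde{W}^{(m)}_t := 2^{-m/2}\sum_{i=1}^{2^m}(W^{i,m}_t-W^{i,m}_m)$ is the Brownian motion driving the COM equation on $[m,m+1)$, while the singular Gaussian vector $(\widetilde{W}^{i,m})_{i=1}^{2^m}$ built from $\sigma_m^{-1}\mathbf{A}^{(m)} W_t^{(m)}$ drives the relative system on the same interval. Because all processes involved are jointly centered Gaussian, a direct covariance computation — the ``sum'' projection and the ``orthogonal'' projection of independent Brownian motions have zero covariance at every time — shows that the process $\widetilde{W}^{(m)}$ is independent of the vector $(\widetilde{W}^{1,m},\ldots,\widetilde{W}^{2^m,m})$. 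Across different $m$'s the original families $\{W^{i,m}\}_i$ are mutually independent by construction of the model, so the entire collection $\{\widetilde{W}^{(m)}\}_{m\ge 0}$ is independent of the entire collection $\{\widetilde{W}^{i,m}\}_{i,m}$.

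Next I would show that $\overline{Z}$ is a measurable functional of the first collection (together with the deterministic initial condition) and $Y$ a measurable functional of the second. For $\overline{Z}$, this follows from its SDE and an induction over $m$: the initial value $\overline{Z}_m$ is determined by the COM on $[0,m)$, which by the inductive hypothesis is a functional of $\widetilde{W}^{(0)},\ldots,\widetilde{W}^{(m-1)}$; then on $[m,m+1)$ the COM is the unique solution of a linear SDE driven by $\widetilde{W}^{(m)}$ from this starting value. For $Y$ the same scheme works provided one checks that the starting values $Y^i_m$ for the $2^m$ new particles depend only on $Y$ up to time $m^-$ and not on $\overline{Z}$. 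This is the one crucial verification: at a branching event each parent splits into two identical offspring at the parent's current location, so $\overline{Z}$ is continuous across integer times, and hence $Y^i_m = Z^i_m - \overline{Z}_m$ inherits the value of the corresponding parent's relative coordinate at time $m^-$ after relabeling. Consequently the recursion defining $Y$ closes without ever invoking $\overline{Z}$, and $Y$ is a measurable functional of $\{\widetilde{W}^{i,m}\}_{i,m}$ alone.

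Putting these ingredients together, $\sigma(\overline{Z})$ and $\sigma(Y)$ are generated by independent collections of driving noises and are therefore independent; in particular $\mathcal{T}$ is independent of $\sigma(Y)$. The only subtle point is the inductive step for $Y$, where one must confirm that the relabeling at branching times does not secretly inject information about $\overline{Z}$ into the initial conditions of the relative system; continuity of the COM at integer times is exactly what makes this decoupling hold.
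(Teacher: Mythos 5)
Your proof is correct and takes essentially the same route the paper intends (the paper simply defers to Lemma~14 of \cite{Englander}, which rests on the same orthogonal decomposition): on each interval $[m,m+1)$ the driving noise $W^{(m)}$ splits into its projection onto the ``all-ones'' direction, driving $\overline{Z}$, and its projection onto the orthogonal complement via $\mathbf{A}^{(m)}$, driving $Y$; since these are jointly Gaussian with zero cross-covariance, and the $W^{(m)}$ are independent across $m$, the two families of noises are mutually independent. You in fact obtain the stronger conclusion that the full $\sigma$-algebras $\sigma(\overline{Z})$ and $\sigma(Y)$ are independent, and you correctly pinpoint the one nontrivial step, namely that continuity of the COM across branching times lets the recursion defining $Y$ at integer times close without ever referencing $\overline{Z}$. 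One small notational caveat: the paper reserves $\widetilde W^{(m)}$ for the \emph{vector} $(\widetilde W^{m,1},\dots,\widetilde W^{m,2^m})^T$ of relative driving motions, whereas you reuse the same symbol for the scalar COM driving motion; rename one of them to avoid a clash.
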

\begin{proof}
The proof of this proposition is exactly the same as the corresponding proof of Lemma 14 in \cite{Englander}.
\end{proof}

Recall that  $Z(\mathrm{d}y)$ denotes the discrete measure-valued process corresponding to the interacting branching  system.
The following notion will be important.
\begin{definition}[Local extinction]\rm 
We say that $Z$ suffers \emph{local extinction}, if 
\begin{equation}\label{loc.ext. def}
Z_n(\mathrm{d}y)\overset{v}\Rightarrow 0,\  \text{as}\ n\to\infty,\ a.s.
\end{equation}
\end{definition}
Since $Z$ is a discrete particle system, \eqref{loc.ext. def} is tantamount to the property that there exists an almost surely finite random time $T$ such that $$P(Z_n(B)=0,\ \forall n\ge T,\ \forall B\subset \mathbb R^d\ \text{ball})=1.$$

\subsection{Inward drift in the motion component}
We now turn to the first results about the behavior of $Z$, starting with the case of an inward drift ($b>0$).
In fact, we distinguish between three further sub-cases.

\medskip
\noindent\textbf{Case 1: $b>0,\ b+\gamma > 0$}.\\
\noindent As we have demonstrated, the center of mass converges to zero as $t\to\infty$, and the relative system will be an inward O-U process with parameter $\gamma + b$. Putting  Theorem \ref{SLLN.thm} and Corollary \ref{SLLN.multidim} together with the a.s. converge of the C.O.M. (Theorem \ref{Th:center}), and finally, with Proposition \ref{indep.prop}, we arrive at the following conclusion.
\begin{theorem} Assume that $b>0$, and $b+\gamma > 0$. Then
$$2^{-n}Z_n(\mathrm{d}y)\Rightarrow \left( \frac{\gamma+b}{\pi}\right)^{d/2} \exp(-(\gamma + b)|y|^2)\, \mathrm{d}y,$$ almost surely.
\end{theorem}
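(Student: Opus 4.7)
The plan is to combine three ingredients already established: the a.s.\ convergence $\overline{Z_n}\to 0$ (Theorem \ref{Th:center}), the SLLN for the relative system (Theorem \ref{SLLN.thm} and Corollary \ref{SLLN.multidim}, applied with $\gamma$ replaced by $b+\gamma$ via the $\Delta$-transformation from Assumption \ref{nodriftass}), and the weak-convergence criterion in Lemma \ref{PB}. Write $\mu_\infty(\mathrm{d}y) := \left(\frac{\gamma+b}{\pi}\right)^{d/2}\exp(-(\gamma+b)|y|^2)\,\mathrm{d}y$ for the target measure; this is the $\mathcal{N}(0,\frac{1}{2(\gamma+b)}I_d)$ density, which, by the asymptotic SDE \eqref{asympt.eq} with parameter $b+\gamma>0$ (inward O-U), is also the limiting marginal law of $Y_n^1$.

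The starting point is the translation identity $Z_n(B) = Y_n(B-\overline{Z_n})$, valid for every Borel $B\subset\mathbb{R}^d$. Since $\mu_\infty\ll\mathtt{Leb}$, Lemma \ref{PB} reduces the proof to showing $2^{-n}Z_n(B)\to\mu_\infty(B)$ almost surely for each $B$ in the countable family $\mathcal{R}$. Applying Corollary \ref{SLLN.multidim} once to every $B\in\mathcal{R}$ and intersecting the resulting null sets with the full-probability event from Theorem \ref{Th:center}, one obtains an event $\Omega_0$ of full probability on which, simultaneously, $\overline{Z_n}\to 0$ and $2^{-n}Y_n(B)\to\mu_\infty(B)$ for every $B\in\mathcal{R}$.

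The remaining obstacle, and the one step that needs care, is to transfer this convergence from the fixed rectangles in $\mathcal{R}$ to the random shifted rectangles $B-\overline{Z_n}$, which a priori are not in $\mathcal{R}$. I would handle this by a standard sandwich argument: fix $B\in\mathcal{R}$ and $\epsilon>0$, then choose $B^-,B^+\in\mathcal{R}$ with $B^-\subset B\subset B^+$ and $\mu_\infty(B^+\setminus B^-)<\epsilon$, which is possible because $B$ is a rectangle and $\mu_\infty$ has a continuous density. On $\Omega_0$, since $\overline{Z_n}\to 0$, eventually $B^-\subseteq B-\overline{Z_n}\subseteq B^+$, so
\[
2^{-n}Y_n(B^-)\ \le\ 2^{-n}Z_n(B)\ =\ 2^{-n}Y_n(B-\overline{Z_n})\ \le\ 2^{-n}Y_n(B^+).
\]
Letting $n\to\infty$ on $\Omega_0$ yields
\[
\mu_\infty(B^-)\ \le\ \liminf_{n}2^{-n}Z_n(B)\ \le\ \limsup_{n}2^{-n}Z_n(B)\ \le\ \mu_\infty(B^+),
\]
and since $\epsilon$ was arbitrary one concludes $2^{-n}Z_n(B)\to\mu_\infty(B)$ on $\Omega_0$. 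Invoking Lemma \ref{PB} on $\Omega_0$ then delivers the desired a.s.\ weak convergence. Proposition \ref{indep.prop} plays no essential role in this case beyond recording that the relative system and the center of mass live on a common probability space; it would, however, be crucial for the cases $b<0$ or $b+\gamma<0$, where the random shift does not vanish and the joint behavior of $Y$ and $\overline{Z}$ matters.
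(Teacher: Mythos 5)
Your proposal is correct and follows essentially the same route as the paper: it combines Theorem \ref{Th:center}, Theorem \ref{SLLN.thm} (via Corollary \ref{SLLN.multidim}), the translation identity $Z_n(B)=Y_n(B-\overline{Z}_n)$, and Lemma \ref{PB}, with the countability of $\mathcal{R}$ used to exchange the order of the ``for all $B$'' and ``a.s.'' quantifiers. You also correctly supply the one step the paper leaves implicit, the sandwich $B^-\subset B\subset B^+$ with $B^\pm\in\mathcal{R}$, which is needed to transfer the limit from fixed rational rectangles to the randomly shifted sets $B-\overline{Z}_n$. Your observation that Proposition \ref{indep.prop} is dispensable in this sub-case is accurate: because $\overline{Z}_n\to 0$ is a \emph{deterministic} limit one simply intersects two full-probability events, and the independence of the COM and the relative system only becomes genuinely necessary when the COM limit is random, as in the driftless situation of \cite{Englander} or the $b<0$ case.
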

\begin{remark}\rm
Since $\mathcal{R}$ is a countable family, the weak limit in the previous theorem is actually   equivalent to the statement that for all $B\in\mathcal{R}$,
$$\lim_{n\to\infty}2^{-n}Z_n(B)= \int_B\left( \frac{\gamma+b}{\pi}\right)^{d/2} \exp(-(\gamma + b)|y|^2)\, \mathrm{d}y,\ a.s.$$ 
 (See the appendix for more elaboration.)$\hfill\diamond$
 \end{remark}
 \begin{example}[Non-interactive branching O-U process]\label{non.inter.OU}\rm 
Consider the case $\gamma=0, b>0$, that is, the case of a non-interacting branching (inward) O-U process with parameter $b$. The proof goes through in this case as well, and we obtain that for all $B\in\mathcal{R}$,
$$\lim_{n\to\infty}2^{-n}Z_n(B)= \int_B\left( \frac{b}{\pi}\right)^{d/2} \exp(-b|y|^2)\, \mathrm{d}y,\ a.s.,$$ 
complementing the exponential-clock results in \cite{EHK2010}.
\end{example}

\noindent\textbf{Case 2: $b>0,\ b+\gamma = 0.$ }\\
\begin{theorem}   In this case, 
\begin{equation}\label{Watanabe.type}
\lim_{n\to\infty}C n^{d/2}2^{-n}Z_n(B)=\mathtt{Leb} (B),\ a.s.,
\end{equation}
for any bounded Borel set $B$,
where $\mathtt{Leb}$ denotes Lebesgue measure. Here $C:=(2\pi)^{d/2}.$ 
\end{theorem}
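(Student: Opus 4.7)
My plan is to mirror the strategy of Theorem \ref{SLLN.thm}, adapting it to the critical regime $b+\gamma=0$ via a sharper second-moment estimate. First, I would reduce to a statement about the relative system. Since $b>0$, Theorem \ref{Th:center} gives $\overline{Z}_n\to 0$ almost surely, and by Proposition \ref{indep.prop} this event is independent of $Y$. Writing $Z_n^i=Y_n^i+\overline{Z}_n$ and using a routine sandwich $B_{-\varepsilon}\subset B-\overline{Z}_n\subset B_{\varepsilon}$ (valid eventually, a.s.), the conclusion reduces, for bounded Borel $B$ with $\mathtt{Leb}(\partial B)=0$, to
\[
Cn^{d/2}\,2^{-n}\sum_{i=1}^{2^n}1_B(Y_n^i)\longrightarrow \mathtt{Leb}(B),\qquad \text{a.s.}
\]
By Lemma \ref{interchange} applied with $\Delta=-b$ (so $b^{\Delta}=\gamma^{\Delta}=0$), we may further replace the relative system by one with no drift and no interaction. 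In that regime the recursion underlying \eqref{x.part} reads $Y_{m+1}^i=Y_m^i\oplus X_m^i$ with $X_m^i\sim\mathcal N(0,\sigma_m^2 I_d)$; by induction $Y_n^i\sim\mathcal N(0,v_n I_d)$ where $v_n:=\sum_{k=0}^{n-1}\sigma_k^2=n-2+2^{1-n}$.

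The expected value is now immediate: a Gaussian computation gives $Cn^{d/2}P(Y_n^i\in B)=(n/v_n)^{d/2}(\mathtt{Leb}(B)+O(1/n))\to \mathtt{Leb}(B)$. By Chebyshev and Borel--Cantelli, exactly as in Theorem \ref{SLLN.thm}, it then suffices to prove $\sum_n\mathtt{Var}(Cn^{d/2}\,2^{-n}\sum_i 1_B(Y_n^i))<\infty$. Repeating the computation of Theorem \ref{Cov theorem} at $\gamma=0$ gives $\mathtt{Cov}(Y_n^i,Y_n^j)=a-2+2^{1-n}$, where $a$ is the MRCA splitting time of $i$ and $j$. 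The key technical input is a strengthening of Lemma \ref{Cov control}: for $X,Y\in\mathbb R^d$ jointly Gaussian with marginals $\mathcal N(0,\sigma^2 I_d)$ and cross-covariance $\rho I_d$, and for a fixed bounded Borel set $B\subset\mathbb R^d$,
\[
\bigl|\mathtt{Cov}(1_B(X),1_B(Y))\bigr|\le \frac{C(B,d)\,(|\rho|+\rho^2)}{\sigma^{2d+4}},\qquad\text{whenever }|\rho|/\sigma^2\le \tfrac12.
\]
This is obtained by Taylor-expanding $\psi(r,B/\sigma)$ in $r=\rho/\sigma^2$ around $r=0$ and exploiting that the domain $B/\sigma$ shrinks as $\sigma\to\infty$: the direct calculations $\psi'(0,B/\sigma)=\bigl|\int_{B/\sigma}u\,\phi_0(u)\,\mathrm du\bigr|^2=O(\sigma^{-2d-2})$ and $\psi''(0,B/\sigma)=O(\sigma^{-2d})$ yield the stated estimate.

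Applying this refined bound with $\sigma^2=v_n$ and splitting the sum over $j\ne i$ into pairs with $a\le n/2$ (Taylor regime, for which $|\rho|\le a\le n/2\le v_n/2$) and $a>n/2$ (for which we use the trivial bound $\mathtt{Cov}(1_B,1_B)\le P(Y_n^i\in B)=O(n^{-d/2})$, together with $\sum_{a>n/2}2^{n-a}\le 2^{n/2+1}$), one obtains, using $\sum_{a\ge 1}(a+a^2)2^{-a}<\infty$,
\[
\sum_{j\ne i}\bigl|\mathtt{Cov}(1_B(Y_n^i),1_B(Y_n^j))\bigr|=O\!\left(\frac{2^n}{n^{d+2}}+\frac{2^{n/2}}{n^{d/2}}\right).
\]
Combined with the diagonal variance $\mathtt{Var}(1_B(Y_n^i))=O(n^{-d/2})$, this gives
\[
\mathtt{Var}\!\left(Cn^{d/2}\,2^{-n}\sum_{i=1}^{2^n}1_B(Y_n^i)\right)=O\!\left(\frac{n^{d/2}}{2^n}+\frac{1}{n^2}+\frac{n^{d/2}}{2^{n/2}}\right),
\]
which is summable in $n$. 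The main obstacle I anticipate is establishing the sharper covariance bound: a naive application of Lemma \ref{Cov control} at correlation $\rho/\sigma^2$ yields only $\mathtt{Cov}(1_B,1_B)\le C|\rho|/\sigma^2$, which, when summed, produces a variance of order $1/n$ after scaling, and is not summable for $d\ge 1$. The refinement hinges on carefully exploiting the shrinkage of $B/\sigma$ to gain the extra $\sigma^{-2d-2}$ factor. Once summability is in hand, Borel--Cantelli together with the transfer back from $Y$ to $Z$ via $\overline{Z}_n\to 0$ completes the proof.
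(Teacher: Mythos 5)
Your proposal takes a genuinely different route from the paper. The paper's proof is a transfer argument: it applies the $\Delta$-transformation with $\Delta=-b$ to match the relative system of $Z$ with that of an ordinary (non-interacting, driftless) branching Brownian motion, invokes the classical Watanabe/Biggins strong law for BBM, then ``pulls back'' that result to a strong law for the relative system (using translation invariance of Lebesgue measure and Proposition \ref{indep.prop}), and finally ``pushes forward'' to $Z$ using $\overline{Z}_n\to 0$. Your proposal instead reproves the strong law for the critical relative system from scratch, following the second-moment template of Theorem \ref{SLLN.thm}. This requires a genuine refinement of Lemma \ref{Cov control}: since in the critical regime $\sigma^2=v_n\sim n$ grows, the paper's bound $|\mathtt{Cov}(1_B,1_B)|\le C|\rho|/\sigma^2$ is not good enough (as you note, it leaves a non-summable $O(1/n)$ after the $n^{d/2}$ rescaling), and the exponent $\sigma^{-2d-4}$ in your sharper bound is precisely what is needed to produce the summable $O(n^{-2})$ term.

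The idea behind your sharper bound (exploit the shrinkage of $B/\sigma$) is correct and the computations of $\psi'(0,B/\sigma)$ and $\psi''(0,B/\sigma)$ are right, but as stated the argument is incomplete: a second-order Taylor expansion of $\psi$ at $r=0$ requires control of $\psi'''(\xi,B/\sigma)$ uniformly for $\xi\in[0,\rho/\sigma^2]$ (or, equivalently, of the tail of the power series), not merely the values of the first two derivatives at $r=0$, and for $\xi\ne 0$ the crude estimate from the proof of Lemma \ref{Cov control} only yields $O(\sigma^{-2d})$, not $O(\sigma^{-2d-2})$, so a factor of $\sigma^{-2}$ is lost if one argues naively. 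A cleaner route is to expand via Mehler's formula, $\psi(r,B/\sigma)=\sum_{\alpha\neq 0}\frac{r^{|\alpha|}}{\alpha!}c_\alpha^2$ with $c_\alpha=\int_{B/\sigma}He_\alpha\,\phi$, and observe that for bounded $B$ one has $c_\alpha^2=O(\sigma^{-2d-2})$ when $|\alpha|=1$ and $c_\alpha^2=O(\sigma^{-2d})$ for $|\alpha|\ge 2$, with the Hermite-coefficient growth controlled by the $\alpha!$ in the denominator so the series can be summed term-by-term for $|r|\le 1/2$. Once that is established, your splitting into close relatives ($a>n/2$, trivial bound) and distant relatives ($a\le n/2$, refined bound), the Borel--Cantelli step, the verification that $Cn^{d/2}P(Y_n^i\in B)\to\mathtt{Leb}(B)$, and the transfer back to $Z$ via $\overline{Z}_n\to 0$ are all sound. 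In short: your route is self-contained and more quantitative (it could be pushed to give a rate), but establishing the refined covariance bound rigorously demands visibly more work than you indicate, whereas the paper's proof is shorter at the cost of importing the classical BBM result.
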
 
\begin{proof}
The proof is based on an argument which `switches off' the interaction.
In order to accomplish this, we are going to utilize the lemma on interchangeability (Lemma \ref{interchange}). Namely, we match the relative system with that of another system without interaction.

This other  system  is the one with $b=\gamma=0$ (branching Brownian motion without interaction). As far as the behavior of this second system is concerned,
it is well known (see \cite{W1967,B1992}), that \eqref{Watanabe.type} holds. 

Even though in \cite{W1967,B1992}, the decomposition into C.O.M. and a relative system was not considered, we do that now.
That is useful, because by Lemma \ref{interchange}, the relative system is the same for the two processes, even though the behavior of the C.O.M. is not: 
for the original system it converges to the origin almost surely (Theorem \ref{Th:center}), and for the non-interacting BBM it has an almost sure (Gaussian) limit (see \cite{Englander}).

Now use  the fact that Lebesgue measure is translation invariant. In both systems, one has
$$Z_t(B)=Y_t(B+\overline{Z}_t),$$
hence
$$C n^{d/2}2^{-n}Z_n(B)=C n^{d/2}2^{-n}Y_n(B+\overline{Z}_n).$$ By conditioning on the almost sure limit of $\overline{Z}_n$, and using Proposition \ref{indep.prop}, the translation invariance of the Lebesgue measure yields \eqref{Watanabe.type} for the {\it relative} system in the $b=\gamma=0$ case.

 But then, by Lemma \ref{interchange}, the same holds for the relative system in the original model.  Since the C.O.M.  converges to the origin almost surely for the original model, using Proposition \ref{indep.prop}, we conclude that  the scaling limit \eqref{Watanabe.type} also holds for the original system.
\end{proof}

\medskip
\noindent\textbf{Case 3:  $b>0,\ b+\gamma < 0.$ }\\
\noindent As the relative system behaves asymptotically like an outward O-U process, we have a conjecture  similar to the one  in \cite{Englander}. In our case, however,
the center of mass tends to $0$ as $t\to\infty$. Thus, the conjecture will take the following form:
\begin{conjecture}\label{sejtes}
The following dichotomy holds for the long term behavior of the process:
\begin{enumerate}
\item If $\frac{\log2}{d} \le|b+\gamma|$, then $Z$ suffers local extinction.
\item If $\frac{\log2}{d} > |b+\gamma|$, then 
$$ 2^{-n}e^{d|b+\gamma| n}Z_n(\mathrm{d}y) \overset{v} \Rightarrow \mathrm{d}y.$$
\end{enumerate}
\end{conjecture}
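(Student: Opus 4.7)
The plan is to reduce to a non-interacting branching outward Ornstein--Uhlenbeck process via the $\Delta$-transformation and then mimic the moment-method template of Section 7. Set $\Delta := \gamma$, giving a non-interactive system $\mathcal{Z}$ with parameters $\gamma^\Delta = 0$ and $b^\Delta = b+\gamma<0$. By \eqref{process.match}, $Z$ has the same law as $t\mapsto \mathcal{Z}_t(\,\cdot\,-D_t)$ with $D_t=\overline{Z}_t-\overline{\mathcal{Z}}_t$. By Theorem \ref{Th:center}, $\overline{Z}_t\to 0$ a.s., while by Theorem \ref{escape} applied to $\mathcal{Z}$, $e^{(b+\gamma)t}\overline{\mathcal{Z}}_t\to\mathcal{N}$ a.s. for a non-degenerate Gaussian $\mathcal{N}$; consequently $|D_t|$ is of order $e^{|b+\gamma|t}$, the same exponential scale at which typical $\mathcal{Z}$-particles escape.

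Step one is a first moment computation. A single non-interacting outward O-U particle at time $n$ is $\mathcal{N}(0,\sigma_n^2 I)$ with $\sigma_n^2\sim\frac{e^{2|b+\gamma|n}}{2|b+\gamma|}$, so for any bounded Borel set $B$ one has
\begin{equation*}
E[\mathcal{Z}_n(B)] \;=\; 2^n\, P(\text{single particle}\in B) \;\sim\; 2^n\cdot\frac{\mathtt{Leb}(B)}{(2\pi\sigma_n^2)^{d/2}} \;\sim\; C\cdot 2^n e^{-d|b+\gamma|n},
\end{equation*}
explaining the threshold in the conjecture. In the subcase $\log 2 < d|b+\gamma|$, this decays exponentially; Markov's inequality plus Borel--Cantelli yields $\mathcal{Z}_n(B)=0$ eventually a.s., which should transfer to $Z$ on the shifted balls $B-D_n$ by conditioning on $\mathcal{N}$ and using that $D_n$ concentrates along a random ray on the $e^{|b+\gamma|n}$ scale. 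In the subcase $\log 2 > d|b+\gamma|$, I would show that $2^{-n}e^{d|b+\gamma|n}\mathcal{Z}_n(B)$ converges a.s.\ to its mean by the Borel--Cantelli and Chebyshev argument of Theorem \ref{SLLN.thm}: summability of the rescaled variance reduces, through an outward-O-U analog of Theorem \ref{Cov theorem}, to estimating $\mathtt{Cov}(\mathcal{Z}_n^i,\mathcal{Z}_n^j)$ by splitting over the MRCA depth $a$ and balancing the $2^{n-a}$ pairs at depth $a$ against the covariance bound.

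The hardest step will be the second moment estimates in the outward regime. In Section 7 the inward O-U coefficient $\gamma+b>0$ provided damping that made the quantity $I_1+I_2$ in Theorem \ref{Cov theorem} small; here the sign of $\gamma+b$ is flipped, so the naive analog of each term grows like $e^{2|b+\gamma|(n-a)}$, and the whole argument depends on cancellations coming from the centering (the relative system carries only the "shape" around $\overline{\mathcal{Z}}_n$, while $\overline{\mathcal{Z}}_n$ absorbs the exponential growth). A second layer of difficulty is transporting the scaling limit through the translation by $D_n$: one must show that $\mathcal{Z}_n$, observed in a bounded window shifted by a $\overline{\mathcal{Z}}_n$-like quantity, still averages to a Lebesgue-like limit — equivalently, that the scaling limit of $\mathcal{Z}$ is translation-invariant at the $e^{|b+\gamma|n}$ scale. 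Finally, the critical case $\log 2 = d|b+\gamma|$ is the most delicate; first moment arguments give only logarithmic divergences and the outcome (local extinction versus a critical limit) is likely decided by a spine/martingale argument in the spirit of classical branching diffusion theory. This combination of obstacles is presumably why the authors leave the statement as a conjecture rather than a theorem.
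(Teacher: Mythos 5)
The statement you have attempted is stated in the paper as a \emph{conjecture} (Conjecture~\ref{sejtes}) and is \emph{not} proved there; the authors leave it open (and the related repulsive case $b,\gamma<0$ is explicitly posed as an Open Problem). So there is no paper proof to compare against. You correctly recognized this, and presented a proof sketch with acknowledged gaps rather than a finished argument, which is the appropriate stance.

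Your outline captures the right intuition. The first moment computation is correct: a single non-interacting outward O-U particle at time $n$ has variance $\sim e^{2|b+\gamma|n}/(2|b+\gamma|)$, so the expected number of particles in a fixed bounded set scales like $2^n e^{-d|b+\gamma|n}$, which produces exactly the threshold $\log 2 = d|b+\gamma|$. (The same calculation also shows that, as written, the display in the conjecture should carry a prefactor $(|b+\gamma|/\pi)^{d/2}$ in front of $\mathrm{d}y$.) Your identification of the three obstacles --- second-moment bounds in the outward regime, transport of the limit through the random shift $D_n$, and the critical case $\log 2=d|b+\gamma|$ --- is accurate, and your suggestion that the critical case likely needs a spine or martingale argument is reasonable.

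One structural remark: going through $\mathcal{Z}_n(B-D_n)$ and then worrying about $D_n$ diverging at rate $e^{|b+\gamma|n}$ introduces an unnecessary difficulty. It is cleaner to work with the relative system $Y$ directly, since the conjecture sits in Case~3 ($b>0$, $b+\gamma<0$), where Theorem~\ref{Th:center} gives $\overline{Z}_n\to 0$ a.s.; hence $Z_n(B)=Y_n(B-\overline{Z}_n)$ involves a \emph{vanishing} rather than a diverging shift, and by Lemma~\ref{interchange} the law of $Y$ coincides with that of the relative system of the non-interacting model, putting you back in the Section~6 framework. The genuine missing ingredient --- which your sketch flags but does not resolve --- is that Lemma~\ref{Cov control} and the proof of Theorem~\ref{SLLN.thm} are engineered for the inward regime, where $\mathtt{Var}(Y_n^i)$ converges to a finite positive constant. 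In the outward regime $\mathtt{Var}(Y_n^i)\to\infty$, so the crude bound $\mathtt{Var}(1_B(Y_n^i))\le 1/4$ no longer helps after the rescaling by $e^{2d|b+\gamma|n}$, and must be replaced by the sharper bound $\mathtt{Var}(1_B(Y_n^i))\le P(Y_n^i\in B)$, which decays like $e^{-d|b+\gamma|n}$; similarly, a covariance-for-indicators estimate in the regime of diverging marginal variances is needed to control the pair terms. Those estimates are not supplied either by the paper or by your sketch, and this is precisely why the statement remains a conjecture.
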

\begin{remark}\rm 
The intuitive explanation of the dichotomy in the conjecture is as follows. Even though the motion component has a strong inward component (forcing the center of mass to tend to the origin, according to Theorem \ref{Th:center}), this is offset by the even stronger repulsion term. 

This combined effect is then competing with the mass creation (the `rate' of mass creation in this case can be taken $\log 2$): if mass creation is stronger, then the Law of Large Numbers is still in force; otherwise the mass creation is no longer able to compensate the fact that particles are `being pushed away.'$\hfill\diamond$
\end{remark}

\subsection{Outward drift in the motion component}
This case is more difficult than the case of the inward drift. The result below is quite natural once the decomposition of the process (C.O.M. plus relative system) is established, however, it may be somewhat  surprising if one is just given the definition of the model with the pairwise particle interactions.

\medskip
\noindent\textbf{Case 4:} $b<0$ (Outward drift)

In this case,  according to Theorem \ref{escape}, the center of mass converges to infinity a.s. as $t\to\infty$, and so the question is, intuitively, whether this effect will be compensated by the large number of particles.

The next result says that for outward drift and attraction, the system {\it always} exhibits local extinction, irrespective of the relationship  between  the drift size $|b|$ and the attraction parameter $\gamma$.
\begin{theorem} For $b<0$ and $\gamma>0$ (outward O-U with attraction), there is local extinction: $$Z_n(\mathrm{d}y)\overset{v}\Rightarrow 0 \ a.s.$$ 
\end{theorem}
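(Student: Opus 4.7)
The plan is to show that in the regime $b<0,\gamma>0$ the center of mass escapes to infinity exponentially fast (at rate $|b|$) while the particles of the relative system spread out strictly more slowly, so that every fixed bounded ball must be empty for all sufficiently large~$n$.

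\emph{Step 1: Escape of the C.O.M.} By Theorem \ref{escape}, $e^{bn}\overline{Z}_n\to \mathcal{N}$ a.s., with $\mathcal{N}$ a non-degenerate Gaussian. Since $|\mathcal{N}|>0$ a.s., there exist an a.s.-positive random variable $\kappa$ and a finite random time $N_0$ such that $|\overline{Z}_n|\ge \kappa\,e^{|b|n}$ for every $n\ge N_0$.

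\emph{Step 2: Spreading of the relative system is strictly slower than $e^{|b|n}$.} Each $Y_n^i$ is a centered Gaussian. A direct solution of the SDE $\mathrm{d}Y_t^i=\sigma_m\mathrm{d}\widetilde W_t^i-(b+\gamma)Y_t^i\,\mathrm{d}t$ on successive unit intervals (or Proposition \ref{Variance Theorem} together with Lemma \ref{interchange} when $b+\gamma>0$, and a direct recursion otherwise) yields, for some constant $C=C(b,\gamma)>0$,
$$\sigma_n^2:=\mathtt{Var}(Y_n^i)\le C\cdot\begin{cases}1,& b+\gamma>0,\\ n,& b+\gamma=0,\\ e^{2|b+\gamma|n},& b+\gamma<0.\end{cases}$$
Since $\gamma>0$ and $b<0$, whenever $b+\gamma<0$ one has $|b+\gamma|=|b|-\gamma<|b|$. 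Hence in every subcase there exists $\rho\in[0,|b|)$ with $\sigma_n^2\le Ce^{2\rho n}$. Fix any $\rho'\in(\rho,|b|)$. The standard Gaussian tail estimate combined with a union bound over the $2^n$ relative particles (symmetry of marginals, dependence irrelevant) gives
$$P\!\left(\max_{1\le i\le 2^n}|Y_n^i|>e^{\rho'n}\right)\le 2^{n+1}\exp\!\left(-\frac{1}{2C}\,e^{2(\rho'-\rho)n}\right),$$
which is summable. Borel-Cantelli then yields $\max_i|Y_n^i|\le e^{\rho'n}$ a.s.\ for all sufficiently large $n$.

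\emph{Step 3: Conclusion.} Fix $R>0$ and consider $B_R:=\{y\in\mathbb R^d:|y|\le R\}$. If some $Z_n^i\in B_R$ then, because $Y_n^i=Z_n^i-\overline{Z}_n$, we have $|Y_n^i|\ge|\overline{Z}_n|-R$. Combining Steps 1 and 2, for a.s.\ all large $n$,
$$|\overline{Z}_n|-R\ge \kappa e^{|b|n}-R>e^{\rho'n}\ge \max_j|Y_n^j|,$$
a contradiction. Thus $Z_n(B_R)=0$ eventually, a.s. Since $R$ is arbitrary, this is local extinction.

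The main obstacle is Step 2: one must control the variance of each relative particle in all three regimes of $b+\gamma$ simultaneously, and check that the exponential rate of spreading is strictly below the escape rate $|b|$ of the C.O.M. The decisive structural fact, which powers the Borel-Cantelli argument, is that $\gamma>0$ forces $|b+\gamma|<|b|$ whenever $b+\gamma<0$, leaving a positive gap. Step 1 is directly Theorem \ref{escape} and Step 3 is a deterministic comparison, so the entire proof reduces to this variance bound and the ensuing union/Borel-Cantelli step.
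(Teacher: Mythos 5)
Your proposal is correct, and the underlying strategy is the same as the paper's: show that the center of mass escapes at exponential rate $|b|$ while the relative system spreads strictly more slowly, and conclude that any fixed ball is eventually empty. The implementation differs in a way worth noting. The paper frames the comparison as $\lim_{t\to\infty}M^{\text{rel}}_t/|\overline{Z}_t|=0$ a.s.\ and then treats the three regimes of $b+\gamma$ separately: for $b+\gamma\ge 0$ it applies the $\Delta$-transform to reduce the relative system to a non-interacting branching Brownian motion (resp.\ inward O-U) whose support grows linearly, and for $b+\gamma<0$ it defers to a first-moment computation in the style of Example 11 of \cite{EHK2010}, extracting the spread rate $|b+\gamma|$. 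You instead give a single unified argument across all three regimes: bound $\mathtt{Var}(Y_n^i)$ by $Ce^{2\rho n}$ with $\rho\in[0,|b|)$ (constant, linear, or $e^{2|b+\gamma|n}$ according to the sign of $b+\gamma$), then combine the Gaussian tail estimate with a union bound over the $2^n$ particles and Borel--Cantelli. This is somewhat cruder (you only need an exponential $e^{\rho'n}$ bound with $\rho'<|b|$ rather than the sharp linear bound in the $b+\gamma\ge 0$ cases), but it is more self-contained -- it avoids the unstated calculation from \cite{EHK2010} -- and handles all three sub-cases in one stroke. Both proofs hinge on the same decisive arithmetic fact that $\gamma>0$ forces $|b+\gamma|=|b|-\gamma<|b|$ whenever $b+\gamma<0$. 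One minor caution: you re-use the symbol $\sigma_n^2$ for $\mathtt{Var}(Y_n^i)$, which clashes with the paper's convention $\sigma_m^2=1-2^{-m}$ for the diffusion coefficient; that should be renamed to avoid confusion, but it is not a mathematical error. For $d>1$, the union-over-coordinates version of the Gaussian tail bound only changes constants, so the argument goes through unchanged.
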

\begin{remark} As far as the relative system's behavior is concerned, that is of course given by Theorem \ref{SLLN.thm} and Corollary \ref{SLLN.multidim}.$\hfill\diamond$
\end{remark}
\begin{proof}
The proof is based on  comparing the speed of the C.O.M. with that of the relative system. 
Let $M^{\text{rel}}_t$ denote the radius of the smallest ball containing the support of $Y_t$ for $t\ge 0$. It is clear that the relation
\begin{equation}\label{implies.le}
\lim_{t\to\infty}\frac{M^{\text{rel}}_t}{|\overline{Z}_t|}=0,\ \text{a.s.},
\end{equation}
proven below,
 implies local extinction; we will verify it by distinguishing between three sub-cases.

\begin{enumerate}
\item[{\sf(a)}] When $b=\gamma=0$, the non-interactive branching Brownian motion will be denoted by $\mathcal{Z}$ ($b=\gamma=0$). Let $Y^{\text{non-int}}$ be the relative system for $\mathcal{Z}$ and let $M^{\text{non-int}}_t$ be the radius of the smallest ball containing the support of $Y^{\text{non-int}}.$

Using the Borel-Cantelli lemma, and then bounding the union by the sum, for $c>\sqrt{2\log 2}$ we obtain 
$$P( \mathcal{Z}_n(\mathbb R^d\setminus B(0,cn))>0\ \text{i.o.})=0.$$ Indeed, this follows from the fact that 
\begin{align*}
\sum_n \sum_{i=1}^{2^n} P( \mathcal{Z}^{i}_{n}(\mathbb R^d\setminus B(0,cn))>0)<\infty,
\end{align*}
which, in turn, follows from the estimate\footnote{Use Brownian scaling and the Gaussian upper tail estimate.} 
$$P( \mathcal{Z}^{i}_{n}(\mathbb R^d\setminus B(0,cn))>0)=\exp (-c^2 n/2+o(n)),$$
where $\{\mathcal{Z}^{i}_{n}; 1\le i\le 2^n\}$ are the particles forming $\mathcal{Z}_{n}$. 

(The asymptotic `speed of the support' of $\mathcal{Z}$ is actually exactly $\sqrt{2\log 2}$, but we don't need this fact here. Cf. \cite{Ky2005}.) 

We now claim that  $b+\gamma = 0$ implies
\begin{equation}\label{wenowclaim}
M^{\text{rel}}_t=\mathcal{O}(t), t\to\infty.
\end{equation} Indeed, recall that by $\Delta$-transform invariance, $$Y=Y^{\text{non-int}}\ \text{in law}.$$ 
But $M^{\text{non-int}}_t=\mathcal{O}(t),$ as $t\to\infty$, because it is well known that
 $$\max\{|z|\mid z\in \mathrm{supp}(\mathcal{Z}_t)\}=\mathcal{O}(t),$$ and because of the existence of  $\lim_{t\to\infty}\overline{\mathcal{Z}_t}$  a.s..

Turning back to $Z$, if $b+\gamma = 0$,  then \eqref{wenowclaim} and the fact that $\overline{Z}_t$ escapes to infinity exponentially fast (according to our Theorem \ref{escape}) yields
\eqref{implies.le}.

\item[{\sf(b)}] If $b+\gamma > 0$, then by $\Delta$-transform invariance, for the relative system, we may assume that it is non-interacting and the particles are performing inward Ornstein-Uhlenbeck motions with parameter $b+\gamma > 0$. The proof is then the same as in ${\sf(a)}$, given that $P( \mathcal{Z}^{i}_{n}(\mathbb R^d\setminus B(0,cn))>0)$  is even smaller now.

\item[{\sf(c)}] Finally, for the case when $b+\gamma < 0$ and $ \gamma > 0$, we still have \eqref{implies.le}. To see this, recall first that  the logarithmic escape rate  of $\overline{\mathcal{Z}_t}$ is $-bt$. On the other hand, a calculation similar\footnote{There the clock is exponential, whereas here it is unit time.} to the one in Example 11 in \cite{EHK2010} reveals that the logarithmic rate of spread of the relative system is $-(b+\gamma)t$, which yields
\eqref{implies.le}. (Since the probability that at least
one particle is present in a set $B$ is trivially dominated by the expected particle number in that set, that is, $P( \mathcal{Z}_{n}(B)>0)\le E( \mathcal{Z}_{n}(B))$, the calculation reduces to computing certain expectations. In our case it is even easier than in \cite{EHK2010}, as the total population size is deterministic.)
\end{enumerate}
Hence, in all three cases, local extinction occurs.
\end{proof}
We conclude with posing an open problem:

\medskip
\noindent{\bf Open problem} (outward O-U with repulsion).\ 
Describe the large-time behavior of the relative/global system for $b,\gamma<0$. (As far as the relative system's behavior  is concerned, the conjecture is the same as in Conjecture \ref{sejtes}.)

\section{The behavior of $\overline{Z}$ for a drift $b(\cdot)$ bounded between positive constants}
So far we have been working under the assumption that the drift is linear: $b(x)=bx$. Assume now instead, that the drift  satisfies that $$0<a<b(x)<b.$$  
For simplicity, we still start with $d = 1$.  Given that
 $$\mathrm{d}Z^i_t =  \mathrm{d}W^i_t + b(Z^i_t)\, \mathrm{d}t + \frac{1}{2^m} \sum_{j=1}^{2^m}\gamma (Z^j_t-Z^i_t)\, \mathrm{d}t,$$ the motion of the center of mass $\overline{Z_t}$ satisfies the equation
 $$\mathrm{d}\overline{Z_t} = \frac{1}{2^m} \sum_{i=1}^{2^m}\mathrm{d}W^i_t + \frac{1}{2^m} \sum_{i=1}^{2^m} b(Z^i_t)\, \mathrm{d}t.$$ 
 As $a<b(x)<b$, we know that $a<\frac{1}{2^m} \sum_{i=1}^{2^m} b(Z^i_t)<b$. We thus have
 $$  \frac{1}{2^m} \sum_{i=1}^{2^m}\mathrm{d}W^i_t + a \, \mathrm{d}t < \mathrm{d}\overline{Z_t} < \frac{1}{2^m} \sum_{i=1}^{2^m}\mathrm{d}W^i_t + b\, \, \mathrm{d}t.$$
Integration on both sides yields
 $$ \frac{1}{2^m} \sum_{i=1}^{2^m}W^i_t + at < \overline{Z_t} <\frac{1}{2^m} \sum_{i=1}^{2^m}W^i_t + bt.$$
 Since (as we have discussed above) $\frac{1}{2^m} \sum_{i=1}^{2^m}W^i_t$ is a  Brownian motion being slowed down, and $b>a>0$, the center $\overline{Z_t}$ will tend to $+\infty$ with an `essentially constant speed' a.s. (For $a<b(x)<b<0$, it tends to $-\infty$ with essentially constant speed a.s.) More precisely, we have the following result.
 \begin{theorem}
 Assume  that the drift  satisfies $0<a<b(x)<b.$ Then
 $$0<a<\liminf_{t\to\infty} t^{-1}\overline{Z_t}\le \limsup_{t\to\infty}  t^{-1}\overline{Z_t}<b.$$
\end{theorem}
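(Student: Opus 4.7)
The plan is to make rigorous the informal sandwich displayed just before the theorem. The subtlety is that the quantity $2^{-m}\sum_{i=1}^{2^m}W^i_t$ appearing in that inequality depends on $m$ (the driving Brownian motions are reset at every branching), so one cannot simply integrate it across all times as a single process. I would recast the argument as a martingale decomposition.

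Write $\overline{Z_t}=M_t+D_t$, where, on each interval $[m,m+1)$, the increment $M_t-M_m$ is the martingale piece $2^{-m/2}(\widetilde W_t-\widetilde W_m)$ coming from Brownian scaling of $2^{-m}\sum_i W^{i,m}_t$ (exactly as in the SDE-for-COM lemma), and $D_t=\int_0^t 2^{-\lfloor s\rfloor}\sum_{i=1}^{2^{\lfloor s\rfloor}}b(Z^{i,\lfloor s\rfloor}_s)\,\mathrm{d}s$. Then $M$ is a continuous martingale with quadratic variation $\langle M\rangle_t=\int_0^t 2^{-\lfloor s\rfloor}\,\mathrm{d}s\le\sum_{m=0}^\infty 2^{-m}=2$, hence $M$ is bounded in $L^2$, so $M_\infty:=\lim_{t\to\infty}M_t$ exists a.s.\ and is finite; in particular $t^{-1}M_t\to 0$ a.s. The pointwise bound $a<b(x)<b$ gives $at\le D_t\le bt$, so dividing by $t$ yields
$$a+\frac{M_t}{t}\le\frac{\overline{Z_t}}{t}\le b+\frac{M_t}{t},$$
and letting $t\to\infty$ produces $a\le\liminf_{t\to\infty} t^{-1}\overline{Z_t}\le\limsup_{t\to\infty} t^{-1}\overline{Z_t}\le b$ almost surely.

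The main obstacle is upgrading to the strict inequalities in the statement, since pointwise $a<b(x)<b$ does not yield any uniform gap (for instance, $b(x)$ could tend to $a$ at infinity). I would handle this by reading the hypothesis as implicitly providing constants $a',b'$ with $a<a'\le b(x)\le b'<b$ (equivalently, taking the true infimum and supremum of $b(\cdot)$, which by assumption lie strictly inside $(a,b)$). Running the above argument with $(a',b')$ in place of $(a,b)$ then delivers the desired strict inequalities. Higher-dimensional extensions are obtained coordinate-wise, exactly as the paper treats the linear-drift case.
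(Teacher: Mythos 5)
Your argument is a rigorous version of the paper's own informal sandwich, and you have put your finger on exactly the point where the paper is sloppy: the display
\[
2^{-m}\sum_{i=1}^{2^m}W^i_t+at<\overline{Z_t}<2^{-m}\sum_{i=1}^{2^m}W^i_t+bt
\]
cannot be obtained by ``integration on both sides'' across branching times, because the driving Brownian motions are re-indexed on each interval $[m,m+1)$ and the prefactor $2^{-m}$ changes. Recasting the noise contribution as a genuine continuous martingale $M$ with
\[
\langle M\rangle_t=\int_0^t 2^{-\lfloor s\rfloor}\,\mathrm{d}s\le 2,
\]
hence $M_t\to M_\infty$ a.s.\ and $M_t/t\to 0$, is the correct way to make this precise, and is exactly the ``slowed-down Brownian motion'' heuristic the paper invokes. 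Combined with the pointwise bound $a\le D_t/t\le b$ on the finite-variation part, this cleanly gives $a\le\liminf t^{-1}\overline{Z_t}\le\limsup t^{-1}\overline{Z_t}\le b$ a.s.

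Your second observation is also correct and worth stressing: the pointwise strict inequality $a<b(x)<b$ does \emph{not} imply the strict inequalities in the theorem's conclusion. If, say, $b(x)\downarrow a$ as $|x|\to\infty$ and the particle cloud escapes, then $D_t/t$ can tend to $a$ and one only gets $\liminf t^{-1}\overline{Z_t}\ge a$, not $>a$. The paper's argument has the same defect: a strict inequality between two processes at each finite $t$ does not survive dividing by $t$ and letting $t\to\infty$. So either the hypothesis should be read, as you propose, as providing a uniform gap $a<a'\le b(\cdot)\le b'<b$, or the conclusion should be weakened to non-strict inequalities (which, since $a>0$, still yields positivity and hence escape to $+\infty$ at linear speed). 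With that reading supplied, your proof is complete, and the coordinate-wise reduction for $d>1$ matches the paper's.
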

Finally,  a similar result holds for $d > 1$, when replacing $|\overline{Z_t}|$ with $\overline{Z_t}$, as one can consider the statement coordinate-wise. 
 
\bigskip
{\bf Acknowledgements:} We are indebted to the referees and to the associate editor for their close reading of the manuscript and for their valuable suggestions.

\end{document}